\newtheorem{theorem}{Theorem}
\numberwithin{equation}{section}
 \numberwithin{Lem}{section}
 \numberwithin{Defi}{section}
 \numberwithin{Theo}{section}
 \numberwithin{Rem}{section}
  \numberwithin{Coro}{section}
  \numberwithin{Fig}{section}
\def\NN{\hbox{\rlap{I}\kern.16em N}}
\def\NC{\hbox{\rlap{\kern.24em\raise.1ex\hbox
                  {\vrule height1.3ex width.9pt}}C}}
\title{An efficient spectral-Galerkin approximation and error analysis for Maxwell transmission eigenvalue problems in spherical geometries
\thanks{This work was supported in part by the National Natural Science Foundation of China (No. 11661022, 11471031, 91430216), NASF U1530401,
and the US National Science Foundation grant DMS-1419040.}}
\author{Jing An\thanks{Beijing Computational Science Research Center, Beijing 100193, China ({\tt anjing@csrc.ac.cn});
and School of Mathematical Sciences, Guizhou Normal University, Guiyang 550025, China ({\tt aj154@163.com}).}
\and Zhimin Zhang\thanks{Beijing Computational Science Research Center, Beijing 100193, China ({\tt zmzhang@csrc.ac.cn});
and Department of Mathematics, Wayne State University, Detroit, MI 48202, USA ({\tt zzhang@math.wayne.edu}).}}
\date{}
\begin{document}

\maketitle

\begin{abstract}
We propose and analyze an efficient spectral-Galerkin approximation for the Maxwell transmission eigenvalue problem in spherical geometry. Using a vector spherical harmonic expansion, we reduce the problem to a sequence of equivalent one-dimensional TE and TM modes that can be solved individually in parallel. For the TE mode, we derive associated generalized eigenvalue problems and corresponding pole conditions. Then we introduce weighted Sobolev spaces based on the pole condition and prove error estimates for the generalized eigenvalue problem. The TM mode is a coupled system with four unknown functions, which is challenging for numerical calculation. To handle it, we design an effective algorithm using Legendre-type vector basis functions. Finally, we provide some numerical experiments to validate our theoretical results and demonstrate the efficiency of the algorithms.

\vskip 5pt \noindent {\bf Keywords:} {Maxwell transmission eigenvalue problems, spherical geometry, TE and TM modes, spectral-Galerkin approximation, error analysis}


\end{abstract}

\section{Introduction}\label{int}
\label{Sec:Intro}
\indent The Maxwell transmission eigenvalue problem is a boundary value problem in a bounded domain which arises in inverse scattering theory for inhomogeneous media. It is well known that the transmission eigenvalues  play a critical role in the reconstruction of inhomogeneous non-absorbing media \cite{qua,TL,CO}.
 In addition, the transmission eigenvalues
are also used to estimate the index of refraction of a non-absorbing inhomogeneous medium
in recent years\cite{OT,TI,ES}. The method consists of several steps. First of all, the support of the
scattering obstacle can be recovered by using the measured scattering data and the linear
sampling method \cite{TIT} and the transmission eigenvalues can be identified from either the far
field or near field data \cite{EOT,OT}. Then, the bounds for smallest and largest eigenvalues of the (matrix) index of
refraction can be obtained in terms of the support of the scattering obstacle and the first
transmission eigenvalue of the anisotropic media \cite{OTD}. Finally, reconstructions of the electric
permittivity (if it is a scalar constant) or an estimate of the eigenvalues of the matrix
in the case of anisotropic permittivity can be obtained \cite{TIT}.

However, the effectiveness of the above method rests on having an efficient and robust algorithm
for computing Maxwell transmission eigenvalues.
Although the Maxwell transmission eigenvalue problem is stated in a simple form, its solution is complex since it is nonstandard so that the classical theory can not be applied directly. There are only a few papers dealing with the numerical computation of Maxwell transmission eigenvalues due to this notorious difficulty. In \cite{FE}, Monk et al. propose two finite element methods in computing a few lowest Maxwell's transmission eigenvalues,  curl-conforming finite element and mixed finite element methods.
In \cite{CO}, Sun et al. presented an iterative method to compute the Maxwell's transmission eigenvalue, where the transmission eigenvalue problem is written as a quad-curl eigenvalue problem. Then the real transmission eigenvalues are shown to be the roots of a nonlinear function whose value is the generalized eigenvalue of a related self-adjoint quad-curl eigenvalue problem which is computed by using a mixed finite element method. In \cite{AR}, Huang et al. also proposed a numerical algorithm for computing Maxwell transmission eigenvalue problems.  However, all these methods are based on low-order finite element methods, so they become very expensive if high accuracy is needed. To the best of our knowledge, no error analysis is given yet for any existing numerical method, even for three-dimensional spherical domains.

In practical applications, we often need to solve Maxwell transmission eigenvalue problems on the special domain of spherical geometries.
 We know of only a few reports on spectral-Galerkin approximation  for the Maxwell transmission eigenvalue problems on the special domain of spherical geometries. Thus, we will
present in this paper an efficient spectral-Galerkin approximation for the Maxwell
transmission eigenvalue problem in spherical geometry.
Using a vector spherical harmonic expansion, we reduce the problem to a sequence of equivalent one-dimensional
TE and TM modes that can be solved individually in parallel. For the TE mode, we
derive associated generalized eigenvalue problems and corresponding pole conditions.
Then we introduce weighted Sobolev spaces corresponding to the pole condition and prove
error estimates for the generalized eigenvalue problem. The TM mode is a coupled
system with four unknown functions, which is challenging for numerical calculation.
To handle it, we design an effective algorithm using Legendre-type vector basis
functions. Finally, we provide some numerical experiments to validate our theoretical
results  and demonstrate the efficiency of the algorithms.

\indent The rest of this paper is organized as follows. In the next section, we introduce the Maxwell transmission eigenvalue problem.
In \S3, we derive a dimension reduction scheme under spherical geometries.
In \S4, we derive the weak formulation and error estimate for the TE mode. In \S5, we describe in detail an efficient implementation of the algorithm.
We present several numerical experiments in \S6 to demonstrate the accuracy and efficiency of our method.
Finally, in \S7 we give some concluding remarks.

\section{Maxwell transmission eigenvalue problem }\label{Sec:Maxwell}
 \indent Let $D\subset \mathbb{R}^3$ be a bounded simply connected region with piecewise smooth boundary $\partial D$ and denote by $\pmb\nu$ the outward normal vector to  $\partial D$.
Let $(.,.)_D$ denote the scalar product in $L^2(D)^3$ and define the Hilbert spaces

\begin{align*}
&H(\mathbf{curl},D):=\{\mathbf{u}\in {L^2(D)}^3:\mathbf{curl}\; \mathbf{u}\in L^2(D)^3\},\\
&H_0(\mathbf{curl},D):=\{\mathbf{u}\in H(\mathbf{curl},D):\mathbf{u}\times \pmb\nu=\pmb 0 ~\text{on}~ \partial D\}
\end{align*}
equipped with the scalar product $(\mathbf{u},\mathbf{v})_{\mathbf{curl}}=(\mathbf{u},\mathbf{v})_{D}+(\mathbf{curl}\;\mathbf{u},\mathbf{curl}\; \mathbf{v})_{D}$ and the corresponding norm $\|\cdot\|_{\mathbf{curl}}$.
Following \cite{OTE}, we also define
\begin{align*}
&\mathcal{U}(D):=\{\mathbf{u}\in  H(\mathbf{curl},D):\mathbf{curl}\; \mathbf{u}\in  H(\mathbf{curl},D)\},\\
&\mathcal{U}_0(D):=\{\mathbf{u}\in H_0(\mathbf{curl},D):\mathbf{curl}\; \mathbf{u}\in  H_0(\mathbf{curl},D)\}
\end{align*}
equipped with the scalar product $(\mathbf{u},\mathbf{v})_{\mathcal{U}}=(\mathbf{u},\mathbf{v})_{\mathbf{curl}}+(\mathbf{curl}\; \mathbf{u},\mathbf{curl}\; \mathbf{v})_{\mathbf{curl}}$ and the corresponding norm $\|\cdot\|_{\mathcal{U}}$.\\

Let $\pmb N$ be a $3\times3$ matrix valued function defined on $D$ such that $\pmb N\in L^\infty(D,\mathbb{R}^{3\times3})$.
{\bf Definition 2.1} A real matrix field $\pmb N$ is said to be bounded positive definite on $D$ if $\pmb N\in L^\infty(D,\mathbb{R}^{3\times3})$ and there exists a constant $\gamma>0$
such that
\begin{align*}
\bar{\mathbf{\xi}}\cdot \pmb N \mathbf{\xi}\geq\gamma|\mathbf{\xi}|^2,\forall \mathbf{\xi}\in \mathbb{C}^3 \text{~~a.e in } D.
\end{align*}\\
We further assume that $\pmb N,\pmb N^{-1}$ and $(\pmb N-\pmb I)^{-1}$ or $(\pmb I-\pmb N)^{-1}$ are bounded positive definite real matrix fields on $D$. The interior transmission eigenvalue problem
for the anisotropic Maxwell's equations in terms of electric fields is formulated as the problem of finding two vector valued functions $\mathbf{E}\in {L^2(D)}^3$  and $\mathbf{E_0}\in {L^2(D)}^3$ such that $\mathbf{E}-\mathbf{E_0}\in \mathcal{U}_0(D)$ satisfies \cite{CO,FE}:
\begin{align}
\mathbf{curl}\mathbf{curl}\mathbf{E}-k^2\pmb N\mathbf{E}=0    &~~~~~~~\text{in}~~~D,  \label{a2.1}\\
\mathbf{curl}\mathbf{curl}\mathbf{E_0}-k^2\mathbf{E_0}=0    &~~~~~~~\text{in}~~~D,    \label{a2.2}\\
\mathbf{E}\times\pmb\nu=\mathbf{E}_0\times\pmb\nu   &~~~~~~~ \text{on}~~~\partial D,  \label{a2.3}\\
\mathbf{curl}\mathbf{E}\times\pmb\nu=\mathbf{curl}\mathbf{E}_0\times\pmb\nu  &~~~~~~~ \text{on} ~~~\partial D. \label{a2.4}
\end{align}
\section{Dimension reduction scheme under spherical geometries}\label{Sec:Dimen}
 \indent We shall restrict our attention to the case where $D$ is a ball of radius $R$ and  $\pmb N=n\pmb I$  with  $n$ being a function along the radial direction.
In this case, by using vector spherical harmonic expansion we can reduce the problem to
 a sequence of equivalent one-dimensional TE and TM modes  that can be solved individually
 in parallel.
 \subsection{Vector spherical harmonics}
 There are several versions with different notation and properties for vector spherical harmonics (see e.g.,\cite{SAO, TTO, RBC,VSH}). We adopt in this paper the family of vector spherical harmonics in \cite{SAO,TTO}. \\

The spherical coordinates $(r,\theta,\phi)$ are related to the Cartesian coordinates $\mathbf{x}=(x_1,x_2,x_3)$ by
 \begin{align}
x_1=r\sin\theta \cos\phi, \quad x_2=r\sin\theta \sin\phi, \quad x_3=r\cos\theta.
\end{align}
Let
 \begin{align}
\mathbf{e}_r=\frac{\mathbf{x}}{r}, \quad \mathbf{e}_\theta=(\sin\theta \cos\phi,\sin\theta \sin\phi, -\sin\theta), \quad \mathbf{e}_\phi=(-\sin\phi,\cos\phi,0).
\end{align}
Then $\{\mathbf{e}_r,\mathbf{e}_\theta,\mathbf{e}_\phi\}$ forms a moving orthonormal  coordinate basis in $\mathbb{R}^3$.

Let $S$ be the unit spherical surface, and denote by $\Delta_S$ and $\nabla_S$ the Laplace-Beltrami and tangential gradient operators on $S$, namely,
\begin{align}
&\triangle_Su=\frac{1}{\sin\theta}\frac{\partial}{\partial\theta}(\sin\theta\frac{\partial u}{\partial\theta})+\frac{1}{\sin^2\theta}\frac{\partial^2u}{\partial\phi^2},\\
&\nabla_Su=\frac{\partial u}{\partial\theta}\mathbf{e}_\theta+\frac{1}{\sin\theta}\frac{\partial u}{\partial\phi}\mathbf{e}_\phi.
\end{align}
The spherical harmonics $\{Y_l^m\}$ (as normalized in \cite{SAO}) are eigenfunctions of $\Delta_S$, i.e.,
\begin{align}
\Delta_SY_l^m=-l(l+1)Y_l^m, \quad l\geq0,\; |m|\leq l;
\end{align}
and form an orthonormal basis for $L^2(S)$:
\begin{align}
\int_SY_l^mY_{l'}^{m'}dS=\delta_{ll'}\delta_{mm'}.
\end{align}
The family of  vector spherical harmonics is defined by \cite{SAO}:
\begin{align}
&\mathbf{T}_l^m=\nabla_SY_l^m\times\mathbf{e}_r=\frac{1}{\sin\theta}\frac{\partial Y_l^m}{\partial\phi}\mathbf{e}_\theta-\frac{\partial Y_l^m}{\partial\theta}\mathbf{e}_\phi, &&\text{for}~~ l\geq1, 0\leq|m|\leq l, \label{harmonic1}\\
&\mathbf{V}_l^m=(l+1)Y_l^m\mathbf{e}_r-\nabla_SY_l^m, &&\text{for}~~ l\geq0, 0\leq|m|\leq l,\label{harmonic2}\\
&\mathbf{W}_l^m=lY_l^m\mathbf{e}_r+\nabla_SY_l^m,&& \text{for}~~l\geq1, 0\leq|m|\leq l,\label{harmonic3}
\end{align}
where $\mathbf{V}_0^0=\frac{1}{\sqrt{4\pi}}\mathbf{e}_r$. With the understanding of $\mathbf{T}_0^0=\mathbf{W}_0^0=\mathbf{0},$ the indexes $\{l,m\}$ run over $\{(l,m):l\geq 0, 0\leq |m|\leq l\}.$

The following lemma was proved in \cite{SAO} ($(A.1)$ and $(A.2)$)\\
{\bf Lemma 3.1}
\begin{align}
&\int_S\mathbf{T}_l^m\cdot\overline{\mathbf{V}_{l'}^{m'}}dS=\int_S\mathbf{T}_l^m\cdot\overline{\mathbf{W}_{l'}^{m'}}dS=\int_S\mathbf{V}_l^m\cdot\overline{\mathbf{W}_{l'}^{m'}}dS=0,\\
&\int_S\mathbf{V}_l^m\cdot\overline{\mathbf{V}_{l'}^{m'}}dS=(l+1)(2l+1)\delta_{ll'}\delta_{mm'},\int_S\mathbf{T}_l^m\cdot\overline{\mathbf{T}_{l'}^{m'}}dS=l(l+1)\delta_{ll'}\delta_{mm'},\\
&\int_S\mathbf{W}_l^m\cdot\overline{\mathbf{W}_{l'}^{m'}}dS=l(2l+1)\delta_{ll'}\delta_{mm'}, \int_S\mathbf{T}_l^m\cdot\nabla_SY_l^mdS=0,\\
&\int_S\nabla_SY_l^m\cdot\nabla_SY_{l'}^{m'}dS=l(l+1)\delta_{ll'}\delta_{mm'}.
\end{align}
Let $f$ is a function along the radial direction and define the differentiation operators:
\begin{align}
&d_l^+=\frac{d}{dr}+\frac{l}{r}, \quad d_l^-=\frac{d}{dr}-\frac{l}{r}.
\end{align}
The following lemma is again proved in \cite{SAO} ($(A.9)$):\\
{\bf Lemma 3.2}
\begin{align}
&\mathbf{curl}(f\mathbf{V}_l^m)=(d_{l+2}^+f)\mathbf{T}_l^m, \quad \mathbf{curl}(f\mathbf{W}_l^m)=-(d_{l-1}^-f)\mathbf{T}_l^m,\\
&(2l+1)\mathbf{curl}(f\mathbf{T}_l^m)=(l+1)(d_{l+1}^+f)\mathbf{W}_l^m-l(d_l^-f)\mathbf{V}_l^m.
\end{align}

\subsection{Dimension reduction scheme}
Let us write
\begin{align}
&\mathbf{E}=\sum_{l=0}^{\infty}\sum_{|m|=0}^{l}(v_l^m(r)\mathbf{V}_l^m(\theta,\phi)+t_l^m(r)\mathbf{T}_l^m(\theta,\phi)+w_l^m(r)\mathbf{W}_l^m(\theta,\phi)),\label{expand1}\\
&\mathbf{E}_0=\sum_{l=0}^{\infty}\sum_{|m|=0}^{l}(\bar{v}_l^m(r)\mathbf{V}_l^m(\theta,\phi)+\bar{t}_l^m(r)\mathbf{T}_l^m(\theta,\phi)+\bar{w}_l^m(r)\mathbf{W}_l^m(\theta,\phi)). \label{expand2}
\end{align}
From Lemma 3.2 we can derive that
\begin{align}
&\mathbf{curl}\mathbf{curl}(v_l^m(r)\mathbf{V}_l^m(\theta,\phi))=\mathbf{curl}((d_{l+2}^+v_l^m(r))\mathbf{T}_l^m(\theta,\phi))\nonumber\\
&=\frac{l+1}{2l+1}(d_{l+1}^+d_{l+2}^+v_l^m(r))\mathbf{W}_l^m(\theta,\phi)-\frac{l}{2l+1}(d_{l}^-d_{l+2}^+v_l^m(r))\mathbf{V}_l^m(\theta,\phi), \label{eq1}\\
&\mathbf{curl}\mathbf{curl}(t_l^m(r)\mathbf{T}_l^m(\theta,\phi))=-(\frac{l}{2l+1}d_{l+2}^+d_{l}^-+\frac{l+1}{2l+1}d_{l-1}^-d_{l+1}^+)t_l^m(r)\mathbf{T}_l^m(\theta,\phi),\\
&\mathbf{curl}\mathbf{curl}(w_l^m(r)\mathbf{W}_l^m(\theta,\phi))=\frac{l}{2l+1}(d_{l}^-d_{l-1}^-w_l^m(r))\mathbf{V}_l^m(\theta,\phi)\nonumber\\
&-\frac{l+1}{2l+1}(d_{l+1}^+d_{l-1}^-w_l^m(r))\mathbf{W}_l^m(\theta,\phi).\label{eq3}
\end{align}
From \eqref{eq1}-\eqref{eq3} and the nonzero of eigenfunction, together with Lemma 3.1, \eqref{a2.1} can be reduced to
\begin{align}
&\frac{l}{2l+1}(d_{l}^-d_{l-1}^-w_l^m(r)-d_{l}^-d_{l+2}^+v_l^m(r))-k_l^2nv_l^m(r)=0,\\
&\frac{l+1}{2l+1}(d_{l+1}^+d_{l+2}^+v_l^m(r)-d_{l+1}^+d_{l-1}^-w_l^m(r))-k_l^2nw_l^m(r)=0,\\
&\frac{-1}{2l+1}(ld_{l+2}^+d_{l}^-+(l+1)d_{l-1}^-d_{l+1}^+)t_l^m(r)-k_l^2nt_l^m(r)=0,
\end{align}
where $l\geq1,0\leq|m|\leq l.$
Similarly, \eqref{a2.2} can be reduced to
\begin{align}
&\frac{l}{2l+1}(d_{l}^-d_{l-1}^-\bar{w}_l^m(r)-d_{l}^-d_{l+2}^+\bar{v}_l^m(r))-k_l^2\bar{v}_l^m(r)=0,\\
&\frac{l+1}{2l+1}(d_{l+1}^+d_{l+2}^+\bar{v}_l^m(r)-d_{l+1}^+d_{l-1}^-\bar{w}_l^m(r))-k_l^2\bar{w}_l^m(r)=0,\\
&\frac{-1}{2l+1}(ld_{l+2}^+d_{l}^-+(l+1)d_{l-1}^-d_{l+1}^+)\bar{t}_l^m(r)-k_l^2\bar{t}_l^m(r)=0,
\end{align}
where $l\geq1,0\leq|m|\leq l.$
From \eqref{harmonic1}-\eqref{harmonic3} and \eqref{expand1}, we can derive that
\begin{align}
&\mathbf{E}\times\pmb\nu=\mathbf{E}\times\mathbf{e}_r=\sum_{l=0}^{\infty}\sum_{|m|=0}^{l}((w_l^m(r)-v_l^m(r))\mathbf{T}_l^m-t_l^m(r)\nabla_SY_l^m).
\end{align}
Similarly, we can derive that
\begin{align}
&\mathbf{E}_0\times\pmb\nu=\mathbf{E}_0\times\mathbf{e}_r=\sum_{l=0}^{\infty}\sum_{|m|=0}^{l}((\bar{w}_l^m(r)-\bar{v}_l^m(r))\mathbf{T}_l^m-\bar{t}_l^m(r)\nabla_SY_l^m).
\end{align}
From Lemma 3.1, the boundary condition \eqref{a2.3} can be reduced to
\begin{align}
&w_l^m(R)-v_l^m(R)=\bar{w}_l^m(R)-\bar{v}_l^m(R), t_l^m(R)=\bar{t}_l^m(R).
\end{align}
From Lemma 3.2 and \eqref{harmonic1}-\eqref{harmonic3} we can derive that
\begin{eqnarray}
&& \mathbf{curl}\; \mathbf{E}\times\pmb\nu = \mathbf{curl}\; \mathbf{E}\times\mathbf{e}_r \\
&=& \sum_{l=0}^{\infty} \sum_{|m|=0}^1 ((d_{l-1}^-w_l^m(r)-d_{l+2}^+v_l^m(r)) \nabla_S Y_l^m
+ (\frac{l}{2l+1}d_{l}^- + \frac{l+1}{2l+1} d_{l+1}^+) t_l^m(r) \mathbf{T}_l^m). \nonumber
\end{eqnarray}
Similarly,  we can derive that
\begin{eqnarray}
& & \mathbf{curl}\; \mathbf{E}_0\times\pmb\nu = \mathbf{curl}\; \mathbf{E}_0\times\mathbf{e}_r \\
&=& \sum_{l=0}^{\infty} \sum_{|m|=0}^{l} ((d_{l-1}^-\bar{w}_l^m(r) - d_{l+2}^ + \bar{v}_l^m(r)) \nabla_S Y_l^m
+ (\frac{l}{2l+1}d_{l}^- + \frac{l+1}{2l+1}d_{l+1}^+) \bar{t}_l^m(r) \mathbf{T}_l^m ). \nonumber
\end{eqnarray}
From Lemma 3.1, the boundary condition \eqref{a2.4} can be reduced to
\begin{align}
&d_{l-1}^-w_l^m(R)-d_{l+2}^+v_l^m(R)=d_{l-1}^-\bar{w}_l^m(R)-d_{l+2}^+\bar{v}_l^m(R),\\
&(\frac{l}{2l+1}d_{l}^-+\frac{l+1}{2l+1}d_{l+1}^+)t_l^m(R)=(\frac{l}{2l+1}d_{l}^-+\frac{l+1}{2l+1}d_{l+1}^+)\bar{t}_l^m(R).
\end{align}
Note that the modes $t_l^m$ (coefficients of $\mathbf{T}_l^m$) are decoupled from the modes $v_l^m$ and $w_l^m$.

In summary, we only have to solve the following sequence ($l\geq1$ and $|m|\leq l$) of one-dimensional eigenvalue problems, i.e., the so-called TE mode:
\begin{align}
&\frac{-1}{2l+1}(ld_{l+2}^+d_{l}^-+(l+1)d_{l-1}^-d_{l+1}^+)t_l^m(r)-k_l^2nt_l^m(r)=0,&r\in(0,R), \label{TE1}\\
&\frac{-1}{2l+1}(ld_{l+2}^+d_{l}^-+(l+1)d_{l-1}^-d_{l+1}^+)\bar{t}_l^m(r)-k_l^2\bar{t}_l^m(r)=0,&r\in(0,R),\label{TE2}\\
&t_l^m(R)=\bar{t}_l^m(R),\label{TE3}\\
&(\frac{l}{2l+1}d_{l}^-+\frac{l+1}{2l+1}d_{l+1}^+)t_l^m(R)=(\frac{l}{2l+1}d_{l}^-+\frac{l+1}{2l+1}d_{l+1}^+)\bar{t}_l^m(R),\label{TE4}
\end{align}
and TM mode:
\begin{align}
&\frac{l}{2l+1}(d_{l}^-d_{l-1}^-w_l^m(r)-d_{l}^-d_{l+2}^+v_l^m(r))-k_l^2nv_l^m(r)=0,&r\in(0,R),\label{TM1}\\
&\frac{l+1}{2l+1}(d_{l+1}^+d_{l+2}^+v_l^m(r)-d_{l+1}^+d_{l-1}^-w_l^m(r))-k_l^2nw_l^m(r)=0,&r\in(0,R),\label{TM2}\\
&\frac{l}{2l+1}(d_{l}^-d_{l-1}^-\bar{w}_l^m(r)-d_{l}^-d_{l+2}^+\bar{v}_l^m(r))-k_l^2\bar{v}_l^m(r)=0,&r\in(0,R),\label{TM3}\\
&\frac{l+1}{2l+1}(d_{l+1}^+d_{l+2}^+\bar{v}_l^m(r)-d_{l+1}^+d_{l-1}^-\bar{w}_l^m(r))-k_l^2\bar{w}_l^m(r)=0,&r\in(0,R),\label{TM4}\\
&w_l^m(R)-v_l^m(R)=\bar{w}_l^m(R)-\bar{v}_l^m(R),\label{TM5}\\
&d_{l-1}^-w_l^m(R)-d_{l+2}^+v_l^m(R)=d_{l-1}^-\bar{w}_l^m(R)-d_{l+2}^+\bar{v}_l^m(R).\label{TM6}
\end{align}

\section{Weak formulation and error estimation of the TE mode}
For brief, we shall only give the error analysis in detail for the TE mode.  For the TM mode, it is a coupled system with four unknown functions which is a
challenging problem for numerical calculation. We propose an efficient numerical algorithm by using Legendre approximation based on vector basis functions.
\subsection{Weak formulation and discrete formulation}
By simplification, the TE mode can be rewritten as follows:
\begin{align}
&-\frac{1}{r^2}\partial_r(r^2\partial_rt_l^m(r))+\frac{l(l+1)}{r^2}t_l^m(r)-k_l^2nt_l^m(r)=0,&r\in(0,R), \label{TE11}\\
&-\frac{1}{r^2}\partial_r(r^2\partial_r\bar{t}_l^m(r))+\frac{l(l+1)}{r^2}\bar{t}_l^m(r)-k_l^2\bar{t}_l^m(r)=0,&r\in(0,R),\label{TE21}\\
&t_l^m(R)=\bar{t}_l^m(R),\label{TE31}\\
&\partial_rt_l^m(R)=\partial_r\bar{t}_l^m(R).\label{TE41}
\end{align}
Next, we formulate \eqref{TE11} - \eqref{TE41}  as a equivalent fourth-order eigenvalue problem.
Let $\tilde{u}_l=t_l^m(r)-\bar{t}_l^m(r)$, $\tilde{L}_l\tilde{u}_l=\frac{1}{r^2}\partial_r(r^2\partial_r\tilde{u}_l)-\frac{l(l+1)}{r^2}\tilde{u}_l$.
Subtracting \eqref{TE21} from \eqref{TE11}, we obtain
\begin{align}
&-(\tilde{L}_l\tilde{u}_l+k_l^2\tilde{u}_l)=k_l^2(n-1)t_l^m
\end{align}
Divding $n-1$ and applying $\tilde{L}_l+k_l^2n$ to both sides of above equation, we obtain
\begin{align}
&(\tilde{L}_l+k_l^2n)\frac{1}{n-1}(\tilde{L}_l+k_l^2)\tilde{u}_l=0.\label{four}
\end{align}
Then the \eqref{TE11}-\eqref{TE41} is equivalent to the following fourth order eigenvalue problem:
\begin{align}
&(\tilde{L}_l+k_l^2n)\frac{1}{n-1}(\tilde{L}_l+k_l^2)\tilde{u}_l=0.\label{four1}\\
&\tilde{u}_l(R)=\tilde{u}_l'(R)=0.\label{four2}
\end{align}

Let $r=\frac{t+1}{2}R$, $\tilde{n}(t)=n(\frac{t+1}{2}R)$, $u_l=\tilde{u}_l(\frac{t+1}{2}R),$ $L_lu_l=\frac{1}{(t+1)^2}\partial_t((t+1)^2\partial_tu_l)-\frac{l(l+1)}{(t+1)^2}u_l$.
Then \eqref{four1}-\eqref{four2}  can be restated as follows:
\begin{align}
&(\frac{4}{R^2}L_l+k_l^2\tilde{n})\frac{1}{\tilde{n}-1}(\frac{4}{R^2}L_l+k_l^2)u_l=0, ~~\text{in}~~(-1,1), \label{four3}\\
&u_l(1)=u_l'(1)=0.\label{four4}
\end{align}
 We now define the usual weighted Sobolev space:
 \begin{align}
L_{\omega}^2(I):=\{u:\int_I\omega u^2dt<\infty\}
\end{align}
equipped with the inner product and norm
 \begin{align}
(u,v)_\omega=\int_I\omega uvdt, \quad  \|u\|_w=(\int_I\omega u^2dt)^{\frac{1}{2}},
\end{align}
where $I=(-1,1)$ and $\omega=1+t$ is a weight function.
Next, we introduce the following non-uniformly weighted Sobolev space:
\begin{align}
&H_{0,\omega,l}^1(I):=\{u:\partial_t^ku\in L_{\omega^{2k}}^2(I),k=0,1,u(1)=0\};\\
&H_{0,\omega,l}^2(I):=\{u:\partial_t^ku\in L_{\omega^{2k-2}}^2(I),k=0,1,2,u(\pm1)=u'(1)=0\}
\end{align}
equipped with the corresponding inner product and norm
 \begin{align}
&(u,v)_{1,\omega,l}=\sum_{k=0}^1(\partial_t^ku,\partial_t^kv)_{\omega^{2k}}, \|u\|_{1,\omega,l}=(u,u)_{1,\omega,l}^{\frac{1}{2}};\\
&(u,v)_{2,\omega,l}=\sum_{k=0}^2(\partial_t^ku,\partial_t^kv)_{\omega^{2k-2}}, \|u\|_{2,\omega,l}=(u,u)_{2,\omega,l}^{\frac{1}{2}}.
\end{align}
Then the weak form of \eqref{four3} -\eqref{four4} is: Find $(k^2_l,u_l)\in \mathbb{C}\times H_{0,\omega,l}^2(I)$, such that
\begin{align}
\int_{-1}^{1}\frac{1}{\tilde{n}-1}(t+1)^2(L_l+\frac{R^2}{4}k_l^2)u_l(L_l+\frac{R^2}{4}k_l^2\tilde{n})\bar{v}dt=0, \forall v\in  H_{0,\omega,l}^2(I). \label{weak1}
\end{align}
Note the condition $u_l(-1)=0$ in $ H_{0,\omega,l}^2(I)$ is a essential polar condition which should be imposed for the well-posedness of the weak form \eqref{weak1} (and the same type of pole condition for $v$).

We now introduce an associated generalized eigenvalue problems. First we define
\begin{align}
&A_{\tau_l}(u_l,v):=(\frac{1}{\tilde{n}-1}(L_l+\frac{R^2}{4}\tau_l)u_l,(L_l+\frac{R^2}{4}\tau_l)v)_{\omega^2}+\tau_l^2(\frac{R^2}{4})^2(u_l,v)_{\omega^2}
\end{align}
for $\tilde{n}>1$, and
\begin{align}
\tilde{A}_{\tau_l}(u_l,v):&=(\frac{1}{1-\tilde{n}}(L_l+\frac{R^2}{4}\tau_l\tilde{n})u_l,(L_l+\frac{R^2}{4}\tau_l\tilde{n})v)_{\omega^2}+\tau_l^2(\frac{R^2}{4})^2(\tilde{n}u_l,v)_{\omega^2}\nonumber\\
&=(\frac{\tilde{n}}{1-\tilde{n}}(L_l+\frac{R^2}{4}\tau_l)u_l,(L_l+\frac{R^2}{4}\tau_l)v)_{\omega^2}+(L_lu_l,L_lv)_{\omega^2}
\end{align}
for $\tilde{n}<1$,
\begin{align}
&B(u_l,v):=\frac{R^2}{4}((\partial_tu_l,\partial_tv)_{\omega^2}+l(l+1)(u_l,v)),
\end{align}
where $\tau_l=k_l^2$.
Then \eqref{weak1} can be written as either
\begin{align}
&A_{\tau_l}(u_l,v)-\tau_lB(u_l,v)=0, \forall v\in H_{0,\omega,l}^2(I), \label{gener1}
\end{align}
or
\begin{align}
&\tilde{A}_{\tau_l}(u_l,v)-\tau_lB(u_l,v)=0, \forall v\in H_{0,\omega,l}^2(I).\label{gener2}
\end{align}
The associated generalized eigenvalue problem is: Find $(\lambda(\tau_l),u_l)\in \mathbb{C}\times H_{0,\omega,l}^2(I)$, such that
\begin{align}
&A_{\tau_l}(u_l,v)-\lambda(\tau_l)B(u_l,v), \forall v\in H_{0,\omega,l}^2(I),\label{weak2}
\end{align}
or
\begin{align}
&\tilde{A}_{\tau_l}(u_l,v)-\lambda(\tau_l)B(u_l,v)=0, \forall v\in H_{0,\omega,l}^2(I), \label{weak3}
\end{align}
where $\lambda(\tau_l)$ is a continuous function of $\tau_l$. From \eqref{gener1}-\eqref{gener2}, we know that a transmission eigenvalue is the root of $f(\tau_l)=\lambda(\tau_l)-\tau_l$.

Let $P_N$ be the space of polynomials of degree less than or equal to $N$, and setting $X_N=P_N\cap H_{0,\omega,l}^2$.
Then the discrete formulation of \eqref{weak2} is: Find $(\lambda_{N}(\tau_l),u_{lN})\in \mathbb{C}\times X_N$, such that
\begin{align}
A_{\tau_l}(u_{lN},v_N)-\lambda_N(\tau_l)B(u_{lN},v_N), \forall v_N\in X_N.\label{discrete1}
\end{align}
The discrete formulation of \eqref{weak3} is: Find $(\lambda_{N}(\tau_l),u_{lN})\in \mathbb{C}\times X_N$, such that
\begin{align}
\tilde{A}_{\tau_l}(u_{lN},v_N)-\lambda_N(\tau_l)B(u_{lN},v_N), \forall v_N\in X_N.\label{discrete2}
\end{align}
\subsection{Error estimation of approximate eigenvalues}
Hereafter, we shall use the expression $a\lesssim b$ to mean that there exists a positive constant $C$ such that $a\leq Cb$.\\
~~~~~\\
{\bf Lemma 4.1}
It holds:
\begin{align*}
&\|u\|_{2,\omega,l}^2\lesssim\|L_lu\|_{\omega^2}^2\lesssim\|u\|_{2,\omega,l}^2.
\end{align*}
\begin{proof}
Since
\begin{align*}
&\|L_lu\|_{\omega^2}^2=\int_{-1}^1(L_lu)^2(t+1)^2dt\\
&=\int_{-1}^1\frac{1}{(t+1)^2}(\partial_t((1+t)^2\partial_tu)-l(l+1)u)^2dt\\
&=\int_{-1}^1((t+1)\partial_t^2u+2\partial_tu-\frac{l(l+1)}{1+t}u)^2dt\\
&=\int_{-1}^1((t+1)^2(\partial_t^2u)^2+2(l^2+l+1)(\partial_tu)^2+l(l^2-1)(l+2)\frac{1}{(1+t)^2}u^2)dt,
\end{align*}
then
\begin{align*}
&\|L_lu\|_{\omega^2}^2\lesssim\int_{-1}^1((t+1)^2(\partial_t^2u)^2+(\partial_tu)^2+\frac{1}{(1+t)^2}u^2)dt=\|u\|_{2,\omega,l}^2.
\end{align*}
From Hardy inequality (cf. B 8.6 in \cite{shen2011}) we have
\begin{align*}
&\int_{-1}^1\frac{1}{(t+1)^2}u^2dt\lesssim \int_{-1}^1(\partial_tu)^2dt.
\end{align*}
Thus, we can obtain
\begin{align*}
&\|L_lu\|_{\omega^2}^2\geq\int_{-1}^1((t+1)^2(\partial_t^2u)^2+2(l^2+l+1)(\partial_tu)^2dt\\
&\gtrsim\int_{-1}^1((t+1)^2(\partial_t^2u)^2+2(l^2+l)(\partial_tu)^2+\frac{1}{(1+t)^2}u^2)dt\\
&\gtrsim\|u\|_{2,\omega,l}^2.
\end{align*}
\end{proof}
\begin{theorem}\label{theorem1}
Let $\tilde{n}\in L^\infty(I)$ satisfy
 \begin{align*}
&1+\alpha\leq \tilde{n}_*\leq\tilde{n}\leq\tilde{n}^*<\infty~~
\text{or}~~0<\tilde{n}_*\leq\tilde{n}\leq\tilde{n}^*<1-\beta
\end{align*}
for some $\alpha>0$ and $\beta>0$ positive constants. Then $A_{\tau_l}$ or $\tilde{A}_{\tau_l}$ is a continuous and coercive sesquilinear form on $H_{0,\omega,l}^2(I)\times H_{0,\omega,l}^2(I)$, i.e.,
 \begin{align}
 &A_{\tau_l}(u,u) ~ \text{or}~ \tilde{A}_{\tau_l}(u,u)\gtrsim\|u\|_{2,\omega,l}^2, \label{posi1}\\
&|A_{\tau_l}(u,v)| ~ \text{or}~ |\tilde{A}_{\tau_l}(u,v)|\lesssim\|u\|_{2,\omega,l}\|v\|_{2,\omega,l},\label{posi2}
\end{align}
where $\tilde{n}_*=\inf_{I}(\tilde{n})$ and $\tilde{n}^*=\sup_{I}(\tilde{n})$.
\end{theorem}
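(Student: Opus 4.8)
The plan is to verify continuity and coercivity separately, for a fixed real parameter $\tau_l = k_l^2$, relying on three ingredients. The first is Lemma 4.1, which gives $\|u\|_{2,\omega,l}^2 \lesssim \|L_l u\|_{\omega^2}^2 \lesssim \|u\|_{2,\omega,l}^2$; this is what lets us trade the cumbersome graph norm of $L_l$ for the Sobolev norm on $H_{0,\omega,l}^2(I)$. The second is the boundedness of the scalar coefficients: under $\tilde n \geq 1+\alpha$ one has $0 < \frac{1}{\tilde n - 1} \leq \frac{1}{\alpha}$, and under $0 < \tilde n \leq \tilde n^* < 1-\beta$ one has $0 < \frac{\tilde n}{1-\tilde n} \leq \frac{\tilde n^*}{\beta}$. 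The third is the elementary embedding $\|u\|_{\omega^2} \lesssim \|u\|_{2,\omega,l}$, which holds because $0 < 1+t \leq 2$ on $I$, so that $\int_I (1+t)^2 u^2\,dt \leq 4\int_I u^2\,dt \leq 16\int_I (1+t)^{-2}u^2\,dt \leq 16\|u\|_{2,\omega,l}^2$. Combining this with the triangle inequality $\|(L_l+\tfrac{R^2}{4}\tau_l)u\|_{\omega^2} \leq \|L_l u\|_{\omega^2} + \tfrac{R^2}{4}|\tau_l|\,\|u\|_{\omega^2}$ and the upper bound in Lemma 4.1 already yields $\|(L_l+\tfrac{R^2}{4}\tau_l)u\|_{\omega^2} \lesssim \|u\|_{2,\omega,l}$.

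Granting the previous paragraph, the continuity bound \eqref{posi2} is routine: apply the Cauchy--Schwarz inequality to each term in the definition of $A_{\tau_l}$ (resp.\ $\tilde A_{\tau_l}$), bound the scalar coefficient by $\frac1\alpha$ (resp.\ $\frac{\tilde n^*}{\beta}$), and then dominate each remaining factor by $\|u\|_{2,\omega,l}$ or $\|v\|_{2,\omega,l}$; for $\tilde A_{\tau_l}$ the additional term $(L_l u, L_l v)_{\omega^2}$ is controlled directly by the upper bound in Lemma 4.1. The resulting constant depends on $\alpha$ (resp.\ $\beta$, $\tilde n^*$), on $R$, and on $\tau_l$, which is all that is needed here.

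The substance of the theorem is the coercivity \eqref{posi1}. The case $\tilde n < 1$ is immediate: since $0 < \tilde n < 1$, the factor $\frac{\tilde n}{1-\tilde n}$ is positive, so the first term of $\tilde A_{\tau_l}(u,u)$ is nonnegative and can be dropped, leaving $\tilde A_{\tau_l}(u,u) \geq \|L_l u\|_{\omega^2}^2 \gtrsim \|u\|_{2,\omega,l}^2$ by the lower bound in Lemma 4.1. The case $\tilde n > 1$ is where the only real work lies. From $\tilde n - 1 \leq \tilde n^* - 1$ we get $\frac{1}{\tilde n-1} \geq \frac{1}{\tilde n^*-1}$, hence
\[
A_{\tau_l}(u,u) \;\geq\; \frac{1}{\tilde n^*-1}\,\big\|(L_l+\tfrac{R^2}{4}\tau_l)u\big\|_{\omega^2}^2 + \Big(\tfrac{R^2}{4}\Big)^{2}\tau_l^2\,\|u\|_{\omega^2}^2 ;
\]
on the other hand $(x+y)^2 \leq 2x^2 + 2y^2$ gives
\[
\|L_l u\|_{\omega^2}^2 \;\leq\; 2\big\|(L_l+\tfrac{R^2}{4}\tau_l)u\big\|_{\omega^2}^2 + 2\Big(\tfrac{R^2}{4}\Big)^{2}\tau_l^2\,\|u\|_{\omega^2}^2 \;\leq\; \max\{2(\tilde n^*-1),\,2\}\; A_{\tau_l}(u,u),
\]
and therefore $A_{\tau_l}(u,u) \gtrsim \|L_l u\|_{\omega^2}^2 \gtrsim \|u\|_{2,\omega,l}^2$ by Lemma 4.1, with a coercivity constant depending only on $\tilde n^*$. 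The one point that requires care is precisely this last splitting: the zeroth-order term $\tau_l^2(\tfrac{R^2}{4})^2\|u\|_{\omega^2}^2$ built into $A_{\tau_l}$ is exactly what absorbs the loss incurred in passing from the shifted operator $L_l+\tfrac{R^2}{4}\tau_l$ back to $L_l$, and — crucially for the later analysis of $\lambda(\tau_l)$ — it does so with a constant independent of $\tau_l$.
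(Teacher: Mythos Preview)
Your proof is correct and, for continuity, essentially identical to the paper's: expand, apply Cauchy--Schwarz termwise, bound the scalar coefficient, and invoke Lemma~4.1 to convert $\|L_l u\|_{\omega^2}$ into $\|u\|_{2,\omega,l}$.

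For coercivity in the case $\tilde n>1$ the two arguments diverge slightly. The paper, after the same reduction to $\frac{1}{\tilde n^*-1}\|(L_l+\tfrac{R^2}{4}\tau_l)u\|_{\omega^2}^2+(\tfrac{R^2}{4}\tau_l)^2\|u\|_{\omega^2}^2$, expands the shifted norm, bounds the cross term $2(L_lu,u)_{\omega^2}$ by $-2\|L_lu\|_{\omega^2}\|u\|_{\omega^2}$, and then completes the square with an auxiliary parameter $\varepsilon\in\bigl(\tfrac{1}{\tilde n^*-1},\,1+\tfrac{1}{\tilde n^*-1}\bigr)$ to isolate a positive multiple of $\|L_lu\|_{\omega^2}^2$. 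Your route is more direct: you write $L_lu=(L_l+\tfrac{R^2}{4}\tau_l)u-\tfrac{R^2}{4}\tau_l\,u$, apply $(x+y)^2\le 2x^2+2y^2$, and absorb the two resulting terms into $A_{\tau_l}(u,u)$ via the factor $\max\{2(\tilde n^*-1),2\}$. Both yield a coercivity constant depending only on $\tilde n^*$ and independent of $\tau_l$; yours avoids the $\varepsilon$-optimization entirely and is arguably the cleaner of the two. For $\tilde n<1$ the paper merely asserts that the argument is similar, whereas you observe explicitly that the $\frac{\tilde n}{1-\tilde n}$ term is nonnegative and may be discarded, leaving $\|L_lu\|_{\omega^2}^2$ directly; this is the natural way to handle that case and is consistent with the paper's claim.
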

\begin{proof}
We shall only give the proof in the case of $1+\alpha\leq \tilde{n}_*\leq\tilde{n}\leq\tilde{n}^*<\infty$. It can be similarly derived for the case of $~~0<\tilde{n}_*\leq\tilde{n}\leq\tilde{n}^*<1-\beta$.
\end{proof}

Since
 \begin{align*}
A_{\tau_l}(u,u)&=(\frac{1}{\tilde{n}-1}(L_l+\frac{R^2}{4}\tau_l)u,(L_l+\frac{R^2}{4}\tau_l)u)_{\omega^2}+(\frac{R^2}{4}\tau_l)^2(u,u)_{\omega^2}\\
&\geq \frac{1}{\tilde{n}^*-1}((L_l+\frac{R^2}{4}\tau_l)u,(L_l+\frac{R^2}{4}\tau_l)u)_{\omega^2}+(\frac{R^2}{4}\tau_l)^2(u,u)_{\omega^2}\\
&\geq\frac{1}{\tilde{n}^*-1}(\|L_lu\|_{\omega^2}^2-\frac{R^2}{2}\tau_l\|L_lu\|_{\omega^2}\|u\|_{\omega^2}+(\frac{R^2}{4}\tau_l)^2\|u\|_{\omega^2}^2)+(\frac{R^2}{4}\tau_l)^2\|u\|_{\omega^2}^2\\
&=\varepsilon(\frac{R^2}{4}\tau_l\|u\|_{\omega^2}-\frac{1}{\varepsilon(\tilde{n}^*-1)}\|L_lu\|_{\omega^2})^2+(\frac{1}{\tilde{n}^*-1}-\frac{1}{\varepsilon(\tilde{n}^*-1)^2})\|L_lu\|_{\omega^2}^2\\
&+(1+\frac{1}{\tilde{n}^*-1}-\varepsilon)(\frac{R^2}{4}\tau_l)^2\|u\|_{\omega^2}^2\geq (\frac{1}{\tilde{n}^*-1}-\frac{1}{\varepsilon(\tilde{n}^*-1)^2})\|L_lu\|_{\omega^2}^2
\end{align*}
for $\frac{1}{\tilde{n}^*-1}<\varepsilon<\frac{1}{\tilde{n}^*-1}+1$,  then from Lemma 4.1 we can obtain \eqref{posi1}.
 Due to
 \begin{align*}
|A_{\tau_l}(u,v)|&=|(\frac{1}{\tilde{n}-1}(L_l+\frac{R^2}{4}\tau_l)u,(L_l+\frac{R^2}{4}\tau_l)v)_{\omega^2}+(\frac{R^2}{4}\tau_l)^2(u,v)_{\omega^2}|\\
&=|(\frac{1}{\tilde{n}-1}L_lu,L_lv)_{\omega^2}+\frac{R^2}{4}\tau_l(\frac{1}{\tilde{n}-1}L_lu,v)_{\omega^2}+\frac{R^2}{4}\tau_l(\frac{1}{\tilde{n}-1}u,L_lv)_{\omega^2}\\
&+(\frac{R^2}{4}\tau_l)^2(\frac{1}{\tilde{n}-1}u,v)_{\omega^2}+(\frac{R^2}{4}\tau_l)^2(u,v)_{\omega^2}|\\
&\leq\frac{1}{\alpha}\|L_lu\|_{\omega^2}\|L_lv\|_{\omega^2}+\frac{R^2}{4\alpha}\tau_l\|L_lu\|_{\omega^2}\|v\|_{\omega^2}+\frac{R^2}{4\alpha}\tau_l\|u\|_{\omega^2}\|L_lv\|_{\omega^2}\\
&+\frac{1}{\alpha}(\frac{R^2}{4}\tau_l)^2\|u\|_{\omega^2}\|v\|_{\omega^2}+(\frac{R^2}{4}\tau_l)^2\|u\|_{\omega^2}\|v\|_{\omega^2}\\
&\leq(\frac{1}{\alpha}+\frac{R^2}{2\alpha}\tau_l+(\frac{R^2}{4}\tau_l)^2(\frac{1}{\alpha}+1))\|L_lu\|_{\omega^2}\|L_lv\|_{\omega^2},
\end{align*}
then from Lemma 4.1 we can obtain \eqref{posi2}. \hspace*{1cm}  $\Box$

Similar to the proof of Theorem 1, we can derive the following Theorem:
\begin{theorem}
$B(u,v)$ is a continuous and coercive bilinear form on $H_{0,\omega,l}^1(I) \times H_{0,\omega,l}^1(I)$, i.e.,
 \begin{align*}
&|B(u,v)|\lesssim\|u\|_{1,\omega,l}\|v\|_{1,\omega,l},\\
&B(u,u)\gtrsim \|u\|_{1,\omega,l}^2.
\end{align*}
\end{theorem}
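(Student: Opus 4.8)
\textit{Proof proposal.} The plan is to observe that, in contrast with $A_{\tau_l}$, the form $B$ is already — up to the harmless factor $R^2/4$ — an honest weighted sum of the two quadratic building blocks of the norm $\|\cdot\|_{1,\omega,l}$, so that neither a Young-type splitting nor the Hardy inequality will be needed. Unpacking the definitions, $\|u\|_{1,\omega,l}^2=(u,u)_{\omega^0}+(\partial_t u,\partial_t u)_{\omega^2}=\int_{-1}^1 u^2\,dt+\int_{-1}^1(1+t)^2(\partial_t u)^2\,dt=\|u\|_0^2+\|\partial_t u\|_{\omega^2}^2$, where $\|u\|_0:=(u,u)^{1/2}$ is the plain $L^2(I)$ norm (the weight $\omega^{2k}$ at $k=0$ is identically $1$), and correspondingly $B(u,v)=\frac{R^2}{4}\big((\partial_t u,\partial_t v)_{\omega^2}+l(l+1)(u,v)\big)$ with $(u,v)=\int_{-1}^1 uv\,dt$.

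For continuity I would just apply the Cauchy--Schwarz inequality in $L^2_{\omega^2}(I)$ and in $L^2(I)$ term by term:
\begin{align*}
|B(u,v)|&\le\frac{R^2}{4}\big(\|\partial_t u\|_{\omega^2}\|\partial_t v\|_{\omega^2}+l(l+1)\|u\|_0\|v\|_0\big)\\
&\le\frac{R^2}{4}\max\{1,l(l+1)\}\,\|u\|_{1,\omega,l}\,\|v\|_{1,\omega,l},
\end{align*}
which yields $|B(u,v)|\lesssim\|u\|_{1,\omega,l}\|v\|_{1,\omega,l}$.

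For coercivity I would bound $B(u,u)$ from below by replacing each coefficient with the smaller of the two and then recombining into the norm:
\begin{align*}
B(u,u)&=\frac{R^2}{4}\big(\|\partial_t u\|_{\omega^2}^2+l(l+1)\|u\|_0^2\big)\\
&\ge\frac{R^2}{4}\min\{1,l(l+1)\}\big(\|\partial_t u\|_{\omega^2}^2+\|u\|_0^2\big)=\frac{R^2}{4}\min\{1,l(l+1)\}\,\|u\|_{1,\omega,l}^2.
\end{align*}
Since the TE mode is considered only for $l\ge1$, we have $l(l+1)\ge2\ge1$, hence $\min\{1,l(l+1)\}=1$ and $B(u,u)\ge\frac{R^2}{4}\|u\|_{1,\omega,l}^2\gtrsim\|u\|_{1,\omega,l}^2$, with a coercivity constant independent of $l$.

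The argument has essentially no obstacle; the only two points that deserve a word of care are (i) reading the $k=0$ term of $\|\cdot\|_{1,\omega,l}$ correctly, namely that $(\cdot,\cdot)_{\omega^0}$ is the \emph{unweighted} $L^2(I)$ inner product, so that $B$ controls the full norm and not merely its $\omega$-weighted $L^2$ part, and (ii) invoking $l\ge1$ so that the zeroth-order coefficient $l(l+1)$ is bounded away from $0$ (for $l=0$ the form would fail to be coercive). Neither the Hardy inequality nor the $\varepsilon$-Young device used in the proof of Theorem 1 enters here, so this proof is in fact shorter than that of Theorem 1 despite the ``similar to'' phrasing.
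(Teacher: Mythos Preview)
Your proof is correct and complete. The paper does not actually write out a proof of this theorem; it simply states ``Similar to the proof of Theorem 1, we can derive the following Theorem,'' so your direct argument---matching the two terms of $B$ to the two summands of $\|\cdot\|_{1,\omega,l}^2$ via Cauchy--Schwarz and a $\min/\max$ on the coefficients---is exactly the natural way to fill in the omitted details, and your observation that neither Hardy's inequality nor the $\varepsilon$-splitting of Theorem~1 is needed here is well taken.
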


To give the error analysis, we will use extensively the minimax principle.\\
\noindent {\bf Lemma 4.2} {\it Let $\lambda^{m}(\tau_l)$ denote the $m$-th eigenvalues of (\ref{weak2}) and $V_{m}$ be any $m$-dimensional
subspace of $H_{0,\omega,l}^2(I) $. Then, for $\lambda^1(\tau_l)\leq \lambda^2(\tau_l) \leq \cdots \leq \lambda^m(\tau_l) \leq \cdots $, there holds}
\begin{eqnarray}\label{e3.3}
\lambda^m(\tau_l)=\min_{V_m\subset H_{0,\omega,l}^2(I) } \max_{v\in V_m}\frac{A_{\tau_l}(v,v)}{B(v,v)}.
\end{eqnarray}
{\bf Proof.} See Theorem 3.1 in \cite{mg2003}.\hspace*{1cm}   $\Box$

\noindent {\bf Lemma 4.3} {\it Let $\lambda^{m}(\tau_l)$ denote the $m$-th eigenvalues of (\ref{weak2}) and
be arranged in an ascending order, and define
\begin{eqnarray*}
E_{i,j}= \hbox{\rm span}\left\{u_l^i,\cdots,u_l^j\right\},
\end{eqnarray*}
where $u_l^i$ is the eigenfunction corresponding to the eigenvalue $\lambda^i(\tau_l)$. Then we have}
\begin{eqnarray}\label{e3.4}
\lambda^m(\tau_l)= \max_{v\in E_{k,m}}\frac{A_{\tau_l}(v,v)}{B(v,v)} && k\leq m,  \\\label{e3.5}
\lambda^m(\tau_l)= \min_{v\in E_{m,n}}\frac{A_{\tau_l}(v,v)}{B(v,v)} && m\leq n .
\end{eqnarray}
{\bf Proof.} See Lemma 3.2 in \cite{mg2003}. \hspace*{1cm}  $\Box$

It is true that the minimax principle is also valid for the discrete formulation (\ref{discrete1}) (see \cite{mg2003}).

\noindent {\bf Lemma 4.4} {\it Let $\lambda_{N}^m (\tau_l)$ denote the $m$-th  eigenvalues of (\ref{discrete1}), and $V_m$ be any $m$-dimensional subspace of $X_N$.
Then, for $\lambda_{N}^1 (\tau_l)\leq \lambda_{N}^2 (\tau_l)\leq \cdots \leq\lambda_{N}^m (\tau_l)\leq \cdots$, there holds}
\begin{eqnarray}\label{e3.6}
\lambda_{N}^m (\tau_l)=\min_{V_m\subset X_N} \max_{v\in V_m}\frac{A_{\tau_l}(v,v)}{B(v,v)}.
\end{eqnarray}
Let $\prod_{N}^{2,l}:H_{0,\omega,l}^2(I)\rightarrow X_N$ be an orthogonal projection, defined by
 \begin{align*}
&A_{\tau_l}(u_l-\Pi_{N}^{2,l}u_l,v)=0,\forall v\in X_N.
\end{align*}
\begin{theorem}
 {\it Let $\lambda_{N}^m (\tau_l)$ be obtained by solving (\ref{discrete1}) and be an approximation of $\lambda^m (\tau_l)$, an eigenvalue of (\ref{weak2}). Then, we have}
\begin{eqnarray}\label{e3.7}
0<\lambda^m (\tau_l)\leq\lambda_{N}^m (\tau_l)\leq\lambda^m (\tau_l)\max_{v\in E_{1,m}}\frac{B(v,v)}{B(\Pi_{N}^{2,l}v,\Pi_{N}^{2,l}v)}.
\end{eqnarray}
\end{theorem}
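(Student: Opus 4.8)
The plan is to obtain both inequalities directly from the minimax characterizations of Lemmas 4.2--4.4, together with the coercivity and continuity of $A_{\tau_l}$ and $B$ from Theorems 1 and 2 and the Galerkin orthogonality that defines $\Pi_{N}^{2,l}$. I work throughout with $A_{\tau_l}$ and (\ref{weak2}); the case of $\tilde A_{\tau_l}$ and (\ref{weak3}) is identical.

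First, the positivity $\lambda^m(\tau_l)>0$ and the lower bound $\lambda^m(\tau_l)\le\lambda_{N}^m(\tau_l)$. By Theorems 1 and 2, for every nonzero $v\in H_{0,\omega,l}^2(I)$ the Rayleigh quotient $A_{\tau_l}(v,v)/B(v,v)$ is finite and bounded below by a fixed positive constant, since $A_{\tau_l}(v,v)\gtrsim\|v\|_{2,\omega,l}^2$, $B(v,v)\lesssim\|v\|_{1,\omega,l}^2$, and $\|v\|_{1,\omega,l}\lesssim\|v\|_{2,\omega,l}$ — the last being a routine weighted-norm comparison on $I$ using that $\omega=1+t\le2$ there. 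Hence the outer minimum in (\ref{e3.3}) is positive, so $\lambda^m(\tau_l)>0$. Moreover $X_N\subset H_{0,\omega,l}^2(I)$, so every $m$-dimensional subspace $V_m\subset X_N$ is admissible in (\ref{e3.3}); therefore replacing the minimization over all of $H_{0,\omega,l}^2(I)$ in (\ref{e3.3}) by the minimization over $X_N$ in (\ref{e3.6}) can only increase the value, which is exactly $\lambda^m(\tau_l)\le\lambda_{N}^m(\tau_l)$.

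For the upper bound I would feed the concrete trial space $W_m:=\Pi_{N}^{2,l}E_{1,m}\subset X_N$, with $E_{1,m}=\hbox{\rm span}\{u_l^1,\dots,u_l^m\}$, into Lemma 4.4. Provided $\Pi_{N}^{2,l}$ is injective on $E_{1,m}$ — if not, $B(\Pi_{N}^{2,l}v,\Pi_{N}^{2,l}v)=0$ for some nonzero $v\in E_{1,m}$, the right side of (\ref{e3.7}) is $+\infty$, and there is nothing to prove — the space $W_m$ is $m$-dimensional, and (\ref{e3.6}) gives
\[
\lambda_{N}^m(\tau_l)\le\max_{w\in W_m}\frac{A_{\tau_l}(w,w)}{B(w,w)}=\max_{v\in E_{1,m}}\frac{A_{\tau_l}(\Pi_{N}^{2,l}v,\Pi_{N}^{2,l}v)}{B(\Pi_{N}^{2,l}v,\Pi_{N}^{2,l}v)}.
\]
The key estimate is the energy inequality $A_{\tau_l}(\Pi_{N}^{2,l}v,\Pi_{N}^{2,l}v)\le A_{\tau_l}(v,v)$, obtained by writing $v=\Pi_{N}^{2,l}v+(v-\Pi_{N}^{2,l}v)$, expanding $A_{\tau_l}(v,v)$ by the symmetry of $A_{\tau_l}$, cancelling the cross term through $A_{\tau_l}(v-\Pi_{N}^{2,l}v,\Pi_{N}^{2,l}v)=0$ (the defining property of $\Pi_{N}^{2,l}$, valid since $\Pi_{N}^{2,l}v\in X_N$), and discarding the nonnegative term $A_{\tau_l}(v-\Pi_{N}^{2,l}v,v-\Pi_{N}^{2,l}v)\ge0$ (Theorem 1). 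Combining this with (\ref{e3.4}) for $k=1$, namely $A_{\tau_l}(v,v)/B(v,v)\le\lambda^m(\tau_l)$ for every $v\in E_{1,m}$, yields for each such $v$
\[
\frac{A_{\tau_l}(\Pi_{N}^{2,l}v,\Pi_{N}^{2,l}v)}{B(\Pi_{N}^{2,l}v,\Pi_{N}^{2,l}v)}\le\frac{A_{\tau_l}(v,v)}{B(v,v)}\cdot\frac{B(v,v)}{B(\Pi_{N}^{2,l}v,\Pi_{N}^{2,l}v)}\le\lambda^m(\tau_l)\,\frac{B(v,v)}{B(\Pi_{N}^{2,l}v,\Pi_{N}^{2,l}v)},
\]
and taking the maximum over $v\in E_{1,m}$ gives exactly the right-hand bound of (\ref{e3.7}).

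The one point that requires care — the main obstacle — is the dimension count showing $W_m=\Pi_{N}^{2,l}E_{1,m}$ is genuinely $m$-dimensional, so that it is a legitimate competitor in the discrete minimax (\ref{e3.6}). I would dispose of it either by the vacuity remark above, or, for a cleaner statement, by noting that $E_{1,m}$ is a fixed finite-dimensional subspace on which $\Pi_{N}^{2,l}$ converges to the identity as $N\to\infty$, so $\Pi_{N}^{2,l}$ is injective on $E_{1,m}$ for all sufficiently large $N$. Everything else is a mechanical chaining of the minimax identities with the symmetry and positivity of $A_{\tau_l}$ and the orthogonality property of $\Pi_{N}^{2,l}$; no further approximation estimates enter.
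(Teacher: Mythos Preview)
Your proof is correct and follows essentially the same route as the paper: positivity from coercivity of $A_{\tau_l}$ and $B$, the lower bound from $X_N\subset H_{0,\omega,l}^2(I)$ and the minimax principles, and the upper bound by testing the discrete minimax against $\Pi_N^{2,l}E_{1,m}$, using Galerkin orthogonality to get $A_{\tau_l}(\Pi_N^{2,l}v,\Pi_N^{2,l}v)\le A_{\tau_l}(v,v)$, and then Lemma~4.3 to bound the Rayleigh quotient on $E_{1,m}$ by $\lambda^m(\tau_l)$. The only difference is that you handle the dimension of $\Pi_N^{2,l}E_{1,m}$ more carefully (vacuity if the projection is not injective, or injectivity for large $N$), whereas the paper simply declares it ``obvious'' that this space is $m$-dimensional.
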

\begin{proof} According to the coerciveness of $A_{\tau_l}(u,v)$ and $B(u,v)$ we can get $\lambda^m (\tau_l)>0$. From $X_N \subset H_{0,\omega,l}^2(I)$, together with (\ref{e3.3}) and (\ref{e3.6}) we can obtain $\lambda^m (\tau_l)\leq\lambda_{N}^m (\tau_l)$.
Let $\Pi_{N}^{2,l}E_{1,m}$ denote the space spanned by $\Pi_{N}^{2,l}u_l^1,\Pi_{N}^{2,l}u_l^2,\cdots,\Pi_{N}^{2,l}u_l^m$.  It is obvious that
$\Pi_{N}^{2,l}E_{1,m}$ is a $m$-dimensional subspace of $X_N$. From the minimax principle, we have
\begin{eqnarray*}
\lambda_{N}^m(\tau_l)\leq  \max_{v\in\Pi_{N}^{2,l}E_{1,m}}\frac{A_{\tau_l}(v,v)}{B(v,v)}=\max_{v\in E_{1,m}}\frac{A_{\tau_l}(\Pi_{N}^{2,l}v,\Pi_{N}^{2,l}v)}{B(\Pi_{N}^{2,l}v,\Pi_{N}^{2,l}v)}.
\end{eqnarray*}
Since
$
A_{\tau_l}(v,v)=A_{\tau_l}(\Pi_{N}^{2,l}v,\Pi_{N}^{2,l}v)+2A_{\tau_l}(v-\Pi_{N}^{2,l}v,\Pi_{N}^{2,l}v)+A_{\tau_l}(v-\Pi_{N}^{2,l}v,v-\Pi_{N}^{2,l}v),
$
from  $A_{\tau_l}(v-\Pi_{N}^{2,l}v,\Pi_{N}^{2,l}v)=0$ and the non-negativity of $A_{\tau_l}(v-\Pi_{N}^{2,l}v,v-\Pi_{N}^{2,l}v)$, we have
\begin{eqnarray*}
A_{\tau_l}(\Pi_{N}^{2,l}v,\Pi_{N}^{2,l}v)\leq A_{\tau_l}(v,v).
\end{eqnarray*}
Thus, we have
\begin{eqnarray*}
\lambda_{N}^m(\tau_l) &\leq& \max_{v\in E_{1,m}}\frac{A_{\tau_l}(v,v)}{B(\Pi_{N}^{2,l}v,\Pi_{N}^{2,l}v)}\\
&=&\max_{v\in E_{1,m}}\frac{A_{\tau_l}(v,v)}{B(v,v)}\frac{B(v,v)}{B(\Pi_{N}^{2,l}v,\Pi_{N}^{2,l}v)}\\
&\leq&\lambda^m(\tau_l)\max_{v\in E_{1,m}}\frac{B(v,v)}{B(\Pi_{N}^{2,l}v,\Pi_{N}^{2,l}v)}.
\end{eqnarray*}
The proof of Theorem 3 is complete. \hspace*{1cm}   $\Box$\\
\end{proof}

We denote the Jacobi weight function of index $(\alpha,\beta)$ by
$
\omega^{\alpha,\beta}(x)=(1-t)^\alpha(1+t)^\beta
$
 and  introduce the following non-uniformly weighted Sobole spaces:
\begin{align*}
H_{\omega^{\alpha,\beta},*}^s(I):=\{u:\partial_t^ku\in L_{\omega^{\alpha+k,\beta+k}},0\leq k\leq s\},
\end{align*}
equipped with the inner product and norm
\begin{align*}
(u,v)_{s,\omega^{\alpha,\beta},*}=\sum_{k=0}^{s}(\partial_t^ku,\partial_t^kv)_{\omega^{\alpha+k,\beta+k}},\|u\|_{s,\omega^{\alpha,\beta},*}=(u,u)_{s,\omega^{\alpha,\beta},*}^{\frac{1}{2}},
\end{align*}
and
\begin{align*}
H_{\omega^{-2,-2},l}^s(I):=\{u\in H_{0,\omega,l}^2(I)\cap H^2(I):\partial_t^ku\in L_{\omega^{-2+k,-2+k}},3\leq k\leq s\},
\end{align*}
equipped with the inner product and norm
\begin{align*}
&(u,v)_{s,\omega^{-2,-2},l}=(u,v)_{2,\omega,l}+\sum_{k=3}^{s}(\partial_t^ku,\partial_t^kv)_{\omega^{-2+k,-2+k}},\\
&\|u\|_{s,\omega^{-2,-2},l}=(u,u)_{s,\omega^{-2,-2},l}^{\frac{1}{2}}.
\end{align*}
Define the orthogonal projection $\pi_{N,\omega^{-2,-2}}:L_{\omega^{-2,-2}}^2(I)\rightarrow Q_N^{-2,-2}$ by
\begin{align}
(u-\pi_{N,\omega^{-2,-2}}u,v_N)_{\omega^{-2,-2}}=0, \forall v_N\in Q_N^{-2,-2}, \label{project}
\end{align}
where $Q_N^{-2,-2}=\{\phi\in P_N:\phi(\pm1)=\phi'(\pm1)=0\}.$
From the Theorem 1.8.2 in \cite{shen2006} we have the following Lemma:\\
{\bf Lemma 4.5.}  For any $u\in H_{\omega^{-2,-2},*}^s(I)$, the following inequality holds:
\begin{align*}
\|\partial_t^2(\pi_{N,\omega^{-2,-2}}u-u)\|\lesssim N^{2-s}\|\partial_t^su\|_{\omega^{-2+s,-2+s}}.
\end{align*}

\begin{theorem}
There exists an operator $\pi_N^{2,l}:H_{0,\omega,l}^2(I)\rightarrow P_N^{0,l}$ such that $\pi_N^{2,l}u(\pm1)=u(\pm1)=0$, and
$\partial_t\pi_N^{2,l}u(-1)=\partial_tu(-1)$, $\partial_t\pi_N^{2,l}u(1)=\partial_tu(1)=0$ and for $u\in H_{\omega^{-2,-2},l}^s(I)$ with $s\geq 2$, there holds
\begin{align*}
&\|\partial_t^2(\pi_{N}^{2,l}u-u)\|\lesssim N^{2-s}(\|\partial_t^su\|_{\omega^{-2+s,-2+s}}+\|\partial_t^2u\|) (s\leq3),\\
&\|\partial_t^2(\pi_{N}^{2,l}u-u)\|\lesssim N^{2-s}\|\partial_t^su\|_{\omega^{-2+s,-2+s}} (s>3),
\end{align*}
where $P_N^{0,l}=\{\phi\in P_N:\phi(\pm1)=\phi'(1)=0\}$.
\end{theorem}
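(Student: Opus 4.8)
The plan is to build $\pi_N^{2,l}u$ as a correction of the interior projection $\pi_{N,\omega^{-2,-2}}u$ from Lemma 4.5. The map $\pi_{N,\omega^{-2,-2}}u$ already lands in $Q_N^{-2,-2}$, so it vanishes together with its first derivative at both endpoints; in particular $\partial_t\pi_{N,\omega^{-2,-2}}u(-1)=0$, which in general differs from $\partial_t u(-1)$. To repair this, I would add a low-degree polynomial bubble $\psi_N$ that fixes the derivative at $t=-1$ while leaving all the other boundary data untouched: concretely, take $\psi_N(t)=\partial_t u(-1)\cdot q(t)$ where $q$ is a fixed cubic (or quartic) polynomial, independent of $N$, satisfying $q(\pm1)=0$, $q'(1)=0$, $q'(-1)=1$ (e.g. $q(t)=\tfrac{1}{8}(1-t)^2(1+t)$ up to a constant). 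Then set
\begin{align*}
\pi_N^{2,l}u := \pi_{N,\omega^{-2,-2}}u + \partial_t u(-1)\, q(t).
\end{align*}
By construction $\pi_N^{2,l}u\in P_N$ (for $N\ge 3$), $\pi_N^{2,l}u(\pm1)=0$, $\partial_t\pi_N^{2,l}u(1)=0$, and $\partial_t\pi_N^{2,l}u(-1)=\partial_t u(-1)$, so $\pi_N^{2,l}u\in P_N^{0,l}$ with the stated interpolation of the boundary data. (Note also $u(\pm1)=0$ and $\partial_t u(1)=0$ because $u\in H_{0,\omega,l}^2(I)$.)

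For the error estimate, write $\pi_N^{2,l}u-u = (\pi_{N,\omega^{-2,-2}}u - u) + \partial_t u(-1)\,q(t)$ and apply the triangle inequality in the $\|\partial_t^2(\cdot)\|$ seminorm. The first term is controlled directly by Lemma 4.5, giving the bound $N^{2-s}\|\partial_t^s u\|_{\omega^{-2+s,-2+s}}$. For the second term, $\|\partial_t^2 q\|$ is a fixed finite constant, so it remains to estimate $|\partial_t u(-1)|$. This is where a trace (embedding) inequality is needed: since $u\in H^2_{0,\omega,l}(I)$ has $u(\pm1)=0$ and $\partial_t u(1)=0$, a one-dimensional Sobolev trace estimate bounds $|\partial_t u(-1)|$ by $\|\partial_t^2 u\|$ (for instance $\partial_t u(-1) = -\int_{-1}^{1}\partial_t^2 u(s)\,ds$, so $|\partial_t u(-1)|\le \sqrt{2}\,\|\partial_t^2 u\|$, since $\partial_t u(1)=0$). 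This produces the extra $\|\partial_t^2 u\|$ term in the $s\le 3$ estimate. When $s>3$ one does not want this lower-order term, so instead I would replace the crude bound on the correction by a bound in terms of the \emph{approximation residual}: estimate $|\partial_t u(-1)-\partial_t(\pi_{N,\omega^{-2,-2}}u)(-1)| = |\partial_t u(-1)|$ — but more usefully, bound $|\partial_t u(-1)|$ against $\|\partial_t^2(u-\phi)\|$ for a better global approximant $\phi\in Q_N^{-2,-2}$, or absorb it using the fact that for $s>3$ the full projection error $\|\partial_t^2(\pi_{N,\omega^{-2,-2}}u-u)\|\lesssim N^{2-s}\|\partial_t^s u\|_{\omega^{-2+s,-2+s}}$ already dominates a fixed multiple of $|\partial_t u(-1)|$ via the same trace inequality applied to $u-\pi_{N,\omega^{-2,-2}}u$ (whose derivative at $-1$ equals $\partial_t u(-1)$ and whose $H^2$ seminorm is $O(N^{2-s})$). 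Combining these gives the second estimate without the $\|\partial_t^2 u\|$ term.

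The main obstacle is getting the $s>3$ estimate clean: one must avoid the spurious $\|\partial_t^2 u\|$ contribution coming from the bubble correction. The right device is to observe that $w:=u-\pi_{N,\omega^{-2,-2}}u$ satisfies $\partial_t w(-1)=\partial_t u(-1)$ (since $\pi_{N,\omega^{-2,-2}}u\in Q_N^{-2,-2}$) and $w(\pm1)=\partial_t w(1)=0$, so the trace inequality gives $|\partial_t u(-1)|=|\partial_t w(-1)|\lesssim \|\partial_t^2 w\| = \|\partial_t^2(\pi_{N,\omega^{-2,-2}}u-u)\|\lesssim N^{2-s}\|\partial_t^s u\|_{\omega^{-2+s,-2+s}}$ for $s>2$; hence the correction term is itself $O(N^{2-s})$ and the extra lower-order term only needs to be retained in the borderline range $s\le 3$ where the $\omega^{-2+s,-2+s}$-weight of $\partial_t^s u$ does not on its own dominate $\|\partial_t^2 u\|$. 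Everything else is a routine triangle-inequality argument together with the fixed-constant bound $\|\partial_t^2 q\|\lesssim 1$.
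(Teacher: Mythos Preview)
Your construction has a genuine gap at the point where you invoke Lemma~4.5 on $u$ itself. Lemma~4.5 requires $u\in H^{s}_{\omega^{-2,-2},*}(I)$, which in particular demands $\partial_t u\in L^2_{\omega^{-1,-1}}(I)$. When $\partial_t u(-1)\neq 0$ this fails, since $(\partial_t u)^2/(1+t)$ is not integrable near $t=-1$. Worse, the projection error $\|\partial_t^2(\pi_{N,\omega^{-2,-2}}u-u)\|$ cannot decay at all in that case: your own trace identity $\partial_t u(-1)=-\int_{-1}^{1}\partial_t^2(u-\pi_{N,\omega^{-2,-2}}u)\,dt$ (valid because $\pi_{N,\omega^{-2,-2}}u\in Q_N^{-2,-2}$) gives the lower bound $\|\partial_t^2(\pi_{N,\omega^{-2,-2}}u-u)\|\ge |\partial_t u(-1)|/\sqrt{2}$, a fixed positive constant. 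So in your decomposition $\pi_N^{2,l}u-u=(\pi_{N,\omega^{-2,-2}}u-u)+\partial_t u(-1)\,q$ neither piece is small; you are relying on a cancellation that your triangle-inequality argument throws away. This also explains why your ``device'' for $s>3$ collapses: it would yield $|\partial_t u(-1)|\lesssim N^{2-s}\to 0$, contradicting that $\partial_t u(-1)$ is a fixed nonzero number.

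The paper avoids this by reversing the order of the two steps: it subtracts the cubic bubble \emph{before} projecting. With $u_*(t)=\tfrac14(t+1)(1-t)^2\,\partial_t u(-1)$ one sets $\pi_N^{2,l}u:=\pi_{N,\omega^{-2,-2}}(u-u_*)+u_*$, so that $\pi_N^{2,l}u-u=\pi_{N,\omega^{-2,-2}}(u-u_*)-(u-u_*)$. Now $u-u_*$ satisfies all four boundary conditions $\partial_t^k(u-u_*)(\pm1)=0$ ($k=0,1$), and Hardy inequalities place it in $H^{s}_{\omega^{-2,-2},*}(I)$, so Lemma~4.5 applies legitimately. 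The distinction between $s\le 3$ and $s>3$ then comes out automatically: $u_*$ is cubic, so $\partial_t^s u_*\equiv 0$ for $s>3$ and $\|\partial_t^s(u-u_*)\|_{\omega^{s-2,s-2}}=\|\partial_t^s u\|_{\omega^{s-2,s-2}}$, while for $s=2,3$ the extra $\|\partial_t^2 u\|$ arises from bounding $|\partial_t u(-1)|$ exactly via the trace argument you wrote down.
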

\begin{proof}
Let $u_*(t)=\frac{1}{4}(t+1)(1-t)^2\partial_tu(-1)$ for $\forall u\in H_{0,\omega,l}^2(I)$. By construction, we have $\partial_t^k(u-u_*)(\pm1)=0,(k=0,1).$
If $u\in H_{\omega^{-2,-2},l}^s(I)$ , then we have  $u-u_*\in H_{\omega^{-2,-2},*}^s(I)$. In fact, from Hardy inequality (cf. B 8.8 in \cite{shen2011}) we have
\begin{align*}
&\int_I\omega^{-2,-2}(u-u_*)^2dt\lesssim\int_I(\partial_t(u-u_*))^2dt,\\
&\int_I\omega^{-2,-2}(\partial_t(u-u_*))^2dt\lesssim\int_I(\partial_t^2(u-u_*))^2dt.
\end{align*}
Thus, we can derive that
\begin{align*}
\int_I\omega^{-2,-2}(u-u_*)^2dt&\lesssim\int_I\omega^{-1,-1}(\partial_t(u-u_*))^2dt\lesssim\int_I(\partial_t^2(u-u_*))^2dt,
\end{align*}
Since
\begin{align*}
\int_I(\partial_t^2u_*)^2dt&= \int_I((\frac{3}{2}t-\frac{1}{2})\partial_tu(-1))^2dt\\
&=2(\partial_tu(-1))^2=2(\int_I\partial_t^2udt)^2\leq4\int_I(\partial^2_tu)^2dt,
\end{align*}
then we have
\begin{align}
\int_I(\partial_t^2(u-u_*))^2dt&\lesssim\int_I(\partial_t^2u)^2dt+\int_I(\partial_t^2u_*)^2dt \lesssim \int_I(\partial_t^2u)^2dt.\label{inequa1}
\end{align}
Similarly, we can derive that $
\int_I\omega^{1,1}(\partial_t^3u_*)^2dt=3(\partial_tu(-1))^2\lesssim\int_I(\partial^2_tu)^2dt.
$
Thus we have
\begin{align}
\int_I\omega^{1,1}(\partial_t^3(u-u_*))^2dt&\lesssim\int_I\omega^{1,1}(\partial_t^3u)^2dt+\int_I\omega^{1,1}(\partial_t^3u_*)^2dt \nonumber \\
&\lesssim \int_I\omega^{1,1}(\partial_t^3u)^2dt+\int_I(\partial^2_tu)^2dt.\label{inequa2}
\end{align}
For $k>3$, we have
\begin{align}
\int_I\omega^{-2+k,-2+k}(\partial_t^k(u-u_*))^2dt&=\int_I\omega^{-2+k,-2+k}(\partial_t^ku)^2dt. \label{inequa3}
\end{align}
Thus, $u-u_*\in H_{\omega^{-2,-2},*}^s(I)$ and we can define
\begin{align*}
\pi_N^{2,l}u=\pi_{N,\omega^{-2,-2}}(u-u_*)+u_* \in P_N^{0,l}, \forall  u\in H_{\omega^{-2,-2},l}^s(I)
\end{align*}
Then by Lemma 4.5 we can obtain
\begin{align*}
\|\partial_t^2(\pi_N^{2,l}u-u)\|=\|\partial_t^2\pi_{N,\omega^{-2,-2}}(u-u_*)-(u-u_*)\|\lesssim N^{2-s}\|\partial_t^s(u-u_*)\|_{\omega^{-2+s,-2+s}}.
\end{align*}
Together with \eqref{inequa1}, \eqref{inequa2} and \eqref{inequa3} we can get desired results.  $\Box$\\
\end{proof}
\begin{theorem}
Let $\lambda^m_{N}(\tau_l)$ is the $m$-th approximate eigenvalue of $\lambda^m(\tau_l)$.\\
 If $\{u_l^{i}\}_{i=1}^{m} \subset H_{\omega^{-2,-2},l}^s(I)$ with $s\geq 2$, then we have
\begin{align*}
&|\lambda^m_{N}(\tau_l)-\lambda^m(\tau_l)|\lesssim C(m)N^{2(2-s)}\max_{i=1,\cdots,m}(\|\partial_t^su_l^i\|_{\omega^{-2+s,-2+s}}+\|\partial_t^2u_l^i\|)^2 (s\leq3),\\
&|\lambda^m_{N}(\tau_l)-\lambda^m(\tau_l)|\lesssim C(m)N^{2(2-s)}\max_{i=1,\cdots,m}\|\partial_t^su_l^i\|_{\omega^{-2+s,-2+s}}^2 (s>3),
\end{align*}
where $C(m)$ is a constant independent of $N$.
\end{theorem}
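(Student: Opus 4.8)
The plan is to combine the abstract error estimate for approximate eigenvalues in Theorem~3 (the bound $\lambda^m(\tau_l)\le\lambda_N^m(\tau_l)\le\lambda^m(\tau_l)\max_{v\in E_{1,m}}B(v,v)/B(\Pi_N^{2,l}v,\Pi_N^{2,l}v)$) with the projection approximation property of the operator $\pi_N^{2,l}$ from Theorem~5. First I would observe that it suffices to control the ratio $B(v,v)/B(\Pi_N^{2,l}v,\Pi_N^{2,l}v)$ uniformly for $v$ in the finite-dimensional eigenspace $E_{1,m}=\mathrm{span}\{u_l^1,\dots,u_l^m\}$; writing a generic $v=\sum_{i=1}^m c_i u_l^i$ with $\sum|c_i|^2=1$ (after $B$-normalization of the basis, which is legitimate since $B$ is a coercive inner product by Theorem~2), the quantity $\|v-\Pi_N^{2,l}v\|_{2,\omega,l}$ is bounded by $C(m)$ times $\max_i\|u_l^i-\Pi_N^{2,l}u_l^i\|_{2,\omega,l}$, where the $m$-dependence enters only through the equivalence of norms on the fixed $m$-dimensional space $E_{1,m}$.

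The second step is to pass from the $A_{\tau_l}$-orthogonal projection $\Pi_N^{2,l}$ to the explicitly constructed interpolation-type operator $\pi_N^{2,l}$ of Theorem~5. Since $\Pi_N^{2,l}$ is the best approximation in the $A_{\tau_l}$-norm and, by the continuity and coercivity of $A_{\tau_l}$ (Theorem~1), the $A_{\tau_l}$-norm is equivalent to $\|\cdot\|_{2,\omega,l}$, which in turn is equivalent to $\|L_l\cdot\|_{\omega^2}$ by Lemma~4.1 and (via Hardy's inequality) to $\|\partial_t^2\cdot\|$ on $H_{0,\omega,l}^2(I)$, we get
\begin{align*}
\|u_l^i-\Pi_N^{2,l}u_l^i\|_{2,\omega,l}\lesssim\|u_l^i-\pi_N^{2,l}u_l^i\|_{2,\omega,l}\lesssim\|\partial_t^2(u_l^i-\pi_N^{2,l}u_l^i)\|\lesssim N^{2-s}\bigl(\|\partial_t^s u_l^i\|_{\omega^{-2+s,-2+s}}+\|\partial_t^2 u_l^i\|\bigr)
\end{align*}
for $s\le 3$, with the $\|\partial_t^2 u_l^i\|$ term dropped when $s>3$, directly from Theorem~5. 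Note that $\pi_N^{2,l}u_l^i\in P_N^{0,l}\subset X_N$ because $u_l^i\in H_{0,\omega,l}^2(I)$ forces $\partial_t u_l^i(1)=0$, so the boundary traces match and the projection really lands in the discrete space.

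The third step is the elementary algebra converting the multiplicative bound of Theorem~3 into the additive one claimed. Using $B(\Pi_N^{2,l}v,\Pi_N^{2,l}v)^{1/2}\ge B(v,v)^{1/2}-\|B\|^{1/2}\|v-\Pi_N^{2,l}v\|_{1,\omega,l}$ together with $\|v-\Pi_N^{2,l}v\|_{1,\omega,l}\le\|v-\Pi_N^{2,l}v\|_{2,\omega,l}$ and $B(v,v)^{1/2}\gtrsim\|v\|_{1,\omega,l}\gtrsim$ const on the $B$-normalized eigenspace, one finds
$$
\frac{B(v,v)}{B(\Pi_N^{2,l}v,\Pi_N^{2,l}v)}\le 1+C(m)\,\max_i\|u_l^i-\Pi_N^{2,l}u_l^i\|_{2,\omega,l}^2\le 1+C(m)N^{2(2-s)}\max_i\bigl(\|\partial_t^s u_l^i\|_{\omega^{-2+s,-2+s}}+\|\partial_t^2u_l^i\|\bigr)^2,
$$
valid once $N$ is large enough that the subtracted term is, say, at most half of $B(v,v)^{1/2}$. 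Plugging this into Theorem~3 and subtracting $\lambda^m(\tau_l)$ yields the two stated estimates, with the constant absorbing $\lambda^m(\tau_l)$ and the norm-equivalence constants on $E_{1,m}$.

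The main obstacle I anticipate is bookkeeping the $m$-dependent constant $C(m)$ rigorously: one must justify that on the fixed finite-dimensional space $E_{1,m}$ all the relevant norms ($B$-norm, $\|\cdot\|_{2,\omega,l}$, and the coefficient $\ell^2$-norm) are equivalent with constants depending only on $m$ and $\tau_l$ but not on $N$, and that the approximation error of $\Pi_N^{2,l}$ applied to a linear combination is controlled by the max of the errors on basis elements — this is where the Cauchy–Schwarz step $\|\sum c_i(u_l^i-\Pi_N^{2,l}u_l^i)\|\le(\sum|c_i|)\max_i\|\cdots\|\le\sqrt m\,\max_i\|\cdots\|$ introduces the only genuine $m$-growth. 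Everything else is a routine chain of the already-established coercivity, Lemma~4.1, and Theorem~5 estimates.
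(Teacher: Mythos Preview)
Your overall strategy—combine Theorem~3 with the approximation properties of $\pi_N^{2,l}$ from Theorem~4 via the best-approximation property of $\Pi_N^{2,l}$ in the $A_{\tau_l}$-norm—is the right one, and your second step is exactly what the paper does. The gap is in your third step: the triangle-inequality argument you sketch only yields a \emph{linear} dependence on the approximation error, not the quadratic one the theorem asserts. From $B(\Pi_N^{2,l}v,\Pi_N^{2,l}v)^{1/2}\ge B(v,v)^{1/2}-C\|v-\Pi_N^{2,l}v\|_{1,\omega,l}$ you get
\[
\frac{B(v,v)}{B(\Pi_N^{2,l}v,\Pi_N^{2,l}v)}\le \frac{1}{\bigl(1-C\|v-\Pi_N^{2,l}v\|/B(v,v)^{1/2}\bigr)^2}=1+O\bigl(\|v-\Pi_N^{2,l}v\|\bigr),
\]
which produces only $N^{2-s}$, not the claimed $N^{2(2-s)}$. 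The squared exponent in the statement is not a typo; the eigenvalue error is genuinely of order (projection error)$^2$.

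The missing idea is to exploit that the test functions are eigenfunctions. Writing $v=\sum_i\mu_i u_l^i$, the cross term that controls the ratio is $B(u_l^i-\Pi_N^{2,l}u_l^i,\,u_l^j)$. Because $u_l^j$ is an eigenfunction, $\lambda^j(\tau_l)B(u_l^j,\cdot)=A_{\tau_l}(u_l^j,\cdot)$, so
\[
B(u_l^i-\Pi_N^{2,l}u_l^i,\,u_l^j)=\frac{1}{\lambda^j(\tau_l)}A_{\tau_l}(u_l^j,\,u_l^i-\Pi_N^{2,l}u_l^i)
=\frac{1}{\lambda^j(\tau_l)}A_{\tau_l}(u_l^j-\Pi_N^{2,l}u_l^j,\,u_l^i-\Pi_N^{2,l}u_l^i),
\]
the second equality using the $A_{\tau_l}$-orthogonality of $\Pi_N^{2,l}$. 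Cauchy--Schwarz now gives a \emph{product} of two approximation errors, hence the doubled rate. This eigenfunction/Galerkin-orthogonality trick is the crux of the proof; once you insert it, the rest of your outline (passage to $\pi_N^{2,l}$, Hardy/Poincar\'e to reduce to $\|\partial_t^2\cdot\|$, then Theorem~4) goes through as you describe.
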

\begin{proof}
For any $v\in E_{1,m}$ it can be represented by $v=\sum_{i=1}^m\mu_iu_l^i$, we then have
\begin{align*}
&\frac{B(v,v)-B(\Pi_{N}^{2,l}v,\Pi_{N}^{2,l}v)}{B(v,v)}\leq \frac{2|B(v,v-\Pi_{N}^{2,l}v)|}{B(v,v)}\\
&\leq\frac{2\sum_{i,j=1}^{m}|\mu_i| |\mu_j||B(u_l^i-\Pi_{N}^{2,l}u_l^i,u_l^j)|}{\sum_{i=1}^{m}|\mu_i|^2}\\
&\leq 2m\max_{i,j=1,\cdots,m}|B(u_l^i-\Pi_{N}^{2,l}u_l^i,u_l^j)|.
\end{align*}
From Cauchy-Schwarz inequality we have
\begin{align*}
&|B(u_l^i-\Pi_{N}^{2,l}u_l^i,u_l^j)|=\frac{1}{\lambda^j(\tau_l)}|\lambda^j(\tau_l)B(u_l^j,u_l^i-\Pi_{N}^{2,l}u_l^i)|\\
&=\frac{1}{\lambda^j(\tau_l)}|A_{\tau_l}(u_l^j,u_l^i-\Pi_{N}^{2,l}u_l^i)|
=\frac{1}{\lambda^j(\tau_l)}|A_{\tau_l}(u_l^j-\Pi_{N}^{2,l}u_l^j,u_l^i-\Pi_{N}^{2,l}u_l^i)|\\
&\leq\frac{1}{\lambda^j(\tau_l)}(A_{\tau_l}(u_l^j-\Pi_{N}^{2,l}u_l^j,u_l^j-\Pi_{N}^{2,l}u_l^j))^{\frac{1}{2}}(A_{\tau_l}(u_l^i-\Pi_{N}^{2,l}u_l^i,u_l^i-\Pi_{N}^{2,l}u_l^i))^{\frac{1}{2}}.\\
\end{align*}
 From Hardy inequality (cf. B 8.6 in \cite{shen2011}) we have
\begin{align*}
&\int_I\frac{1}{(t+1)^2}u^2dt\lesssim\int_I(\partial_tu)^2dt.
\end{align*}
Then from  Poincar$\acute{e}$ inequality we can obtain
\begin{align*}
&\|u\|_{2,\omega,l}^2=\int_I(t+1)^2(\partial_t^2u)^2dt+\int_I(\partial_tu)^2dt
+\int_I\frac{1}{(t+1)^2}u^2dt\\
&\lesssim \int_I(\partial_t^2u)^2dt+\int_I(\partial_tu)^2dt\lesssim \int_I(\partial_t^2u)^2dt.
\end{align*}
From the property of orthogonal project \eqref{project} and continuity of $A_{\tau_l}(u,v)$ in Theorem 1 we can derive that
\begin{align*}
&|B(u_l^i-\Pi_{N}^{2,l}u_l^i,u_l^j)|=\frac{1}{\lambda^j(\tau_l)}|A_{\tau_l}(u_l^j,u_l^i-\Pi_{N}^{2,l}u_l^i)|\\
&\leq\frac{1}{\lambda^j(\tau_l)}(A_{\tau_l}(u_l^j-\Pi_{N}^{2,l}u_l^j,u_l^j-\Pi_{N}^{2,l}u_l^j))^{\frac{1}{2}}(A_{\tau_l}(u_l^i-\Pi_{N}^{2,l}u_l^i,u_l^i-\Pi_{N}^{2,l}u_l^i))^{\frac{1}{2}}\\
&\leq\frac{1}{\lambda^j(\tau_l)}(A_{\tau_l}(u_l^j-\pi_{N}^{2,l}u_l^j,u_l^j-\pi_{N}^{2,l}u_l^j))^{\frac{1}{2}}(A_{\tau_l}(u_l^i-\pi_{N}^{2,l}u_l^i,u_l^i-\pi_{N}^{2,l}u_l^i))^{\frac{1}{2}}\\
&\lesssim\frac{1}{\lambda^j(\tau_l)}\|u_l^j-\pi_{N}^{2,l}u_l^j\|_{2,\omega,l}\|u_l^i-\pi_{N}^{2,l}u_l^i\|_{2,\omega,l}\\
&\lesssim \frac{1}{\lambda^j(\tau_l)}\|\partial_t^2(u_l^j-\pi_{N}^{2,l}u_l^j)\|\cdot\|\partial_t^2(u_l^i-\pi_{N}^{2,l}u_l^i)\|.
\end{align*}
Since
\begin{align*}
\frac{B(v,v)}{B(\Pi_{N}^{2,l}v,\Pi_{N}^{2,l}v)}\leq\frac{1}{1-2m\max_{i,j=1,\cdots,m}|B(u_l^i-\Pi_{N}^{2,l}u_l^i,u_l^j)|},
\end{align*}
then from Theorem 3 and Theorem 4 we can get desired results. $\Box$
\end{proof}

\section{Efficient implementation of the algorithm}
We describe in this section how to solve the TE mode \eqref{TE1}-\eqref{TE4} and TM mode \eqref{TM1}-\eqref{TM6}  efficiently.
 \subsection{Efficient implementation of the algorithm for TE mode}
 We start by constructing a set of basis functions for $X_N$.
Let
\begin{align*}
 \phi_i(t) =d_i(L_i(t)-\frac{2(2i+5)}{2i+7}L_{i+2}(t)+\frac{2i+3}{2i+7}L_{i+4}(t)), i = 0\cdots N-4,
\end{align*}
where $d_i=\frac{1}{\sqrt{2(2i+3)^2(2i+5)}}$ and $L_i$ is the Legendre polynomial of degree $i$.\\
 It is clear that
 \begin{align*}
 X_N=\text{span}\{\phi_0(t),\cdots,\phi_{N-4}(t)\}\oplus \text{span}\{\phi_{N-3}(t)\},
 \end{align*}
 where $\phi_{N-3}(t)=\frac{1}{4}(t+1)(t-1)^2$.
 Setting
   \begin{align*}
a_{i,j}=A_{\tau_l}(\phi_j(t),\phi_i(t)),\quad b_{i,j}=B(\phi_j(t),\phi_i(t)).
 \end{align*}
 We shall look for
  \begin{align}
u_{lN}=\sum_{i=0}^{N-3}u_{i}^l\phi_i.\label{express2}
 \end{align}
 Now, plugging the expression of \eqref{express2} in \eqref{discrete1}, and taking $v_{N}$
through all the basis functions in $X_N$, we will arrive at the following linear eigenvalue
system:
 \begin{align}
AU_l=\lambda_{N}(\tau_l)BU_l, \label{5.2}
\end{align}
where
 \begin{align*}
&A=(a_{ij}),B=(b_{ij}),U_l=(u_{0}^l,\cdots,u_{N-3}^l)^T.
\end{align*}

 \subsection{Efficient implementation of the algorithm for TM mode}
 Let $r=\frac{t+1}{2}R$, $W(t)=w_l^m(\frac{t+1}{2}R)$,$V(t)=v_l^m(\frac{t+1}{2}R)$,$\tilde{n}(t)=n(\frac{t+1}{2}R)$, $w(t)=\bar{w}_l^m(\frac{t+1}{2}R)$,$v(t)=\bar{v}_l^m(\frac{t+1}{2}R)$, $D_l^{\pm}=\partial_t\pm\frac{l}{l+1}$.
Then  \eqref{TM1}-\eqref{TM6} can be restated as follows:
 \begin{align}
&D_l^-(D_{l-1}^-W(t)-D_{l+2}^+V(t))-k_l^2\frac{R^2}{4}\tilde{n}(t)\frac{2l+1}{l}V(t)=0,&t\in(-1,1), \label{resTM1}\\
&D_{l+1}^+(D_{l+2}^+V(t)-D_{l-1}^-W(t))-k_l^2\frac{R^2}{4}\tilde{n}(t)\frac{2l+1}{l+1}W(t)=0,&t\in(-1,1),\label{resTM2}\\
&D_l^-(D_{l-1}^-w(t)-D_{l+2}^+v(t))-k_l^2\frac{R^2}{4}\frac{2l+1}{l}v(t)=0, &t\in(-1,1),\label{resTM3}\\
&D_{l+1}^+(D_{l+2}^+v(t)-D_{l-1}^-w(t))-k_l^2\frac{R^2}{4}\frac{2l+1}{l+1}w(t)=0,&t\in(-1,1),\label{resTM4}\\
&W(1)-V(1)=w(1)-v(1),\label{resTM5}\\
&D_{l-1}^-W(1)-D_{l+2}^+V(1)=D_{l-1}^-w(1)-D_{l+2}^+v(1).\label{resTM6}
\end{align}
Let us define
\begin{align*}
\mathbf{H}(I)=\{\mathbf{h}=(h_1(t),h_2(t),h_3(t),h_4(t))\in (H^1(I))^4:h_1(1)-h_2(1)=h_3(1)-h_4(1)\}.
 \end{align*}
Then the weak form of \eqref{resTM1}-\eqref{resTM6} is: Find $(k_l^2,(W(t),V(t),w(t),v(t))\neq\mathbf{0})\in\mathbb{C}\times \mathbf{H}(I),$ such that for all $(h_1(t),h_2(t),h_3(t),h_4(t))\in \mathbf{H}(I)$,
 \begin{align}
&\int_I(D_{l+2}^+V(t)-D_{l-1}^-W(t))(D_{l+2}^+h_1(t)-D_{l-1}^-h_2(t))(t+1)^2dt \nonumber\\
&-\int_I(D_{l+2}^+v(t)-D_{l-1}^-w(t))(D_{l+2}^+h_3(t)-D_{l-1}^-h_4(t))(t+1)^2dt \nonumber\\
&=k_l^2(2l+1)\frac{R^2}{4}(\frac{1}{l}\int_I(\tilde{n}(t)V(t)h_1(t)-v(t)h_3(t))(t+1)^2dt\nonumber\\
&+\frac{1}{l+1}\int_I(\tilde{n}(t)W(t)h_2(t)-w(t)h_4(t))(t+1)^2dt). \label{weaktm}
 \end{align}
 Let $\mathbf{H}_N=\mathbf{H}(I)\cap P_N^4$, then the discrete form of \eqref{weaktm} is: Find $(k_{Nl}^2,(W_N,V_N,w_N,v_N)\neq\mathbf{0})\in\mathbb{C}\times \mathbf{H}_N,$ such that for all $(h_{1N}(t),h_{2N}(t),h_{3N}(t),h_{4N}(t))\in \mathbf{H}_N$,
 \begin{align}
&\int_I(D_{l+2}^+V_N(t)-D_{l-1}^-W_N(t))(D_{l+2}^+h_{1N}(t)-D_{l-1}^-h_{2N}(t))(t+1)^2dt \nonumber\\
&-\int_I(D_{l+2}^+v_N(t)-D_{l-1}^-w_N(t))(D_{l+2}^+h_{3N}(t)-D_{l-1}^-h_{4N}(t))(t+1)^2dt \nonumber\\
&=k_{Nl}^2(2l+1)\frac{R^2}{4}(\frac{1}{l}\int_I(\tilde{n}(t)V_N(t)h_{1N}(t)-v_N(t)h_{3N}(t))(t+1)^2dt\nonumber\\
&+\frac{1}{l+1}\int_I(\tilde{n}(t)W_N(t)h_{2N}(t)-w_N(t)h_{4N}(t))(t+1)^2dt). \label{discretetm}
 \end{align}
 We  now  construct a set of basis functions for $\mathbf{H}_N$. Let
 \begin{align*}
\varphi_i(t)=L_i(t)-L_{i+2}(t), (i=0,1,\cdots,N-2), \varphi_{N-1}=\frac{1-t}{2},\varphi_N=1.
 \end{align*}
Setting
 \begin{align*}
&\mathbf{\psi}_j(t)=(\varphi_j(t),0,0,0),\mathbf{\psi}_{N+j}(t)=(0,\varphi_j(t),0,0),\mathbf{\psi}_{2N+j}(t)=(0,0,\varphi_j(t),0),\\
&\mathbf{\psi}_{3N+j}(t)=(0,0,0,\varphi_j(t)),\mathbf{\psi}_{4N}(t)=(\varphi_N(t),0,\varphi_N(t),0),\\
&\mathbf{\psi}_{4N+1}(t)=(\varphi_N(t),\varphi_N(t),0,0),\mathbf{\psi}_{4N+2}(t)=(0,0,\varphi_N(t),\varphi_N(t)),
  \end{align*}
  where $j=0,1,\cdots,N-1$.
It is clear that
 \begin{align*}
 \mathbf{H}_N=\text{span}\{\mathbf{\psi}_0(t),\mathbf{\psi}_1(t),\cdots,\mathbf{\psi}_{4N+2}(t)\}.
 \end{align*}
 We shall look for
  \begin{align}
(W,V,w,v)=\sum_{i=0}^{4N+2}\alpha_{i}\psi_i(t).\label{express3}
 \end{align}
 Now, plugging the expression of \eqref{express3} in \eqref{discretetm}, and taking $(h_{1N}(t),h_{2N}(t),h_{3N}(t),h_{4N}(t))$
through all the basis functions in $\mathbf{H}_N$, we will arrive at the following linear eigenvalue
system:
 \begin{align}
 \mathcal{A}\mathcal{U}=k_{Nl}^2\mathcal{B}\mathcal{U}, \label{5.12}
\end{align}
where
$\mathcal{U}=(\alpha_0,\alpha_1,\cdots,\alpha_{4N+2})^T,$
$ \mathcal{A} $ and $ \mathcal{B}$  are the corresponding  stiff matrix and mass matrix, respectively .\\
Note that the stiff matrix $A$ (resp.~$\mathcal{A}$)  and the mass matrix $B$ (resp.~$\mathcal{B}$) in \eqref{5.2} (resp.~\eqref{5.12}) are all sparse for the constant $n$. Thus either direct or iterative eigen solvers can be efficiently applied in parallel. For the case of variable $n$, since one is mostly interested in a few smallest transmission eigenvalues, it is
most efficient to solve \eqref{5.2} (resp.~\eqref{5.12}) by using a shifted inverse power method (cf., for instance,
\cite{Go1989}) which requires solving, repeatedly for different righthand side $f$ (resp. $\tilde{f}$),

 \begin{align}
(A-\lambda_a(\tau_l)B)U_l=f~(\text{resp.} ~~(\mathcal{A}-k_{al}^2\mathcal{B})\mathcal{U}=\tilde{f}),
\end{align}
where $\lambda_a(\tau_l)$ (resp. $k_{al}$) is some approximate value for the transmission eigenvalue $\lambda_N(\tau_l)$ (resp. $k_{Nl}$).
The above system can be efficiently solved by the Schur-complement approach,
 we refer to   \cite{kw2007} for a detailed description on a
related problem.
In summary, the approximate transmission eigenvalue problem \eqref{5.2} (resp.~\eqref{5.12}) can be solved
very efficiently.
\section{Numerical experiments}
\indent We now perform a sequence of numerical tests to study the convergence behavior
and show the effectiveness of our algorithm. We operate our programs in MATLAB 2015b.
\subsection{Homogeneous medium $n$}
We take $R=1$, $n=16$, and $l=1,2,3$ in our examples. Numerical results for the first four eigenvalues with different $l$ and $N$ are listed in Tables 6.1-6.3 for the TE mode and in Tables 6.4-6.6 for the TM mode, respectively.

\begin{center}{\small\bf Table 6.1 The first four eigenvalues of the TE mode for $l=1$ and different $N$ in unit ball}
\begin{tabular}{{ccccc}}
  \hline
\multicolumn{1}{c} N &$1st$&      $2nd$&    $ 3rd$&   $4th$\\ \hline
\multicolumn{1}{c}{10}&      1.460855902327352 &  2.309268980991891&  3.142098536003481 &   4.004131427018431\\
\multicolumn{1}{c}{15}&     1.460842273223355 &  2.309270674683650&   3.141592652865679&   4.028313168694923\\
\multicolumn{1}{c}{20}&    1.460855902076009&    2.309270674683547&   3.141592653589792&  4.028312376370235\\
\multicolumn{1}{c}{25}&  1.460855902076010&    2.309270674683545&      3.141592653589794&  4.028312376370704\\
\multicolumn{1}{c}{30}&  1.460855902076010&   2.309270674683548&      3.141592653589792&   4.028312376370695\\
  \hline
\end{tabular}\end{center}
\begin{center}{\small\bf Table 6.2 The first four eigenvalues of the TE mode for $l=2$ and different $N$
in unit ball }
\begin{tabular}{{ccccc}}
  \hline
\multicolumn{1}{c} N&$1st$&      $2nd$&    $ 3rd$&   $4th$\\ \hline
\multicolumn{1}{c}{10}&    1.764042417090797&   2.631690641913292&  3.465152456664587& 4.304027448259460\\
\multicolumn{1}{c}{15}&    1.764042422029338&   2.631678257808169& 3.465236228216971& 4.293583051636833\\
\multicolumn{1}{c}{20} & 1.764042422029338&    2.631678257809425&     3.465236224179554&  4.293582919867683\\
\multicolumn{1}{c}{25} &  1.764042422029339&    2.631678257809425&      3.465236224179551&  4.293582919866948\\
\multicolumn{1}{c}{30} & 1.764042422029338&    2.631678257809422&     3.465236224179552&  4.293582919866945\\
\\
  \hline
\end{tabular}\end{center}
\begin{center}{\small\bf Table 6.3 The first four eigenvalues of the TE mode for $l=3$ and different $N$
in unit ball}
\begin{tabular}{{ccccc}}
  \hline
\multicolumn{1}{c} N &$1st$&      $2nd$&    $ 3rd$&   $4th$\\ \hline
\multicolumn{1}{c}{10} &   2.061050417316723  &  2.949614759058087& 3.789381768848592&  4.729312472707909\\
\multicolumn{1}{c}{15} &   2.061050433015994 &   2.949488215613814& 3.792296568087934&   4.619875856328957\\
\multicolumn{1}{c}{20} &   2.061050433015994&   2.949488215659479&   3.792296458205384& 4.619887058309071\\
\multicolumn{1}{c}{25} &   2.061050433015994&   2.949488215659483&    3.792296458205412&  4.619887058253892\\
\multicolumn{1}{c}{30} &   2.061050433015993&    2.949488215659481&    3.792296458205412&     4.619887058253892\\
  \hline
\end{tabular}\end{center}

\begin{center}{\small\bf Table 6.4 The first four eigenvalues of the TM mode for $l=1$ and different $N$
in unit ball}
\begin{tabular}{{ccccc}}
  \hline
\multicolumn{1}{c} N &$1st$&      $2nd$&    $ 3rd$&   $4th$\\ \hline
\multicolumn{1}{c}{10}&    1.165407223825574&    2.045867252670490  & 3.158017632244895& 3.442289242670909\\
\multicolumn{1}{c}{15}&     1.165407223827102 &  2.045867782103358&   3.418097647742193&  4.292461561664239\\
\multicolumn{1}{c}{20}&   1.165407223827106&    2.045867782103363&  3.418097651533288&  4.292488875027842\\
\multicolumn{1}{c}{25}&   1.165407223827098& 2.045867782103357&      3.418097651533263&  4.292488875029357\\
\multicolumn{1}{c}{30}&  1.165407223827108&  2.045867782103361&      3.418097651533326&   4.292488875029386\\
  \hline
\end{tabular}\end{center}
\begin{center}{\small\bf Table 6.5 The first four eigenvalues of the TM mode for $l=2$ and different $N$
in unit ball }
\begin{tabular}{{ccccc}}
  \hline
\multicolumn{1}{c} N&$1st$&      $2nd$&    $ 3rd$&   $4th$\\ \hline
\multicolumn{1}{c}{10}&     1.475116524235235&   2.340653677153939&  3.231276889331592& 3.354682219133095\\
\multicolumn{1}{c}{15}&     1.475116524493845&   2.340657592735246&  3.233313705482311& 4.557043549625480\\
\multicolumn{1}{c}{20} & 1.475116524493846&    2.340657592735365&      3.233313708702767& 4.557097304644416\\
\multicolumn{1}{c}{25} &   1.475116524493844&  2.340657592735367&      3.233313708702756& 4.557097304725269\\
\multicolumn{1}{c}{30} & 1.475116524493843&   2.340657592735366&    3.233313708702761&  4.557097304725263\\
\\
  \hline
\end{tabular}\end{center}
\begin{center}{\small\bf Table 6.6 The first four eigenvalues of the TM mode for $l=3$ and different $N$
in unit ball}
\begin{tabular}{{ccccc}}
  \hline
\multicolumn{1}{c} N &$1st$&      $2nd$&    $ 3rd$&   $4th$\\ \hline
\multicolumn{1}{c}{10} & 1.777410985980160 &  2.656258796160595& 3.455344578108596& 3.582237584322505\\
\multicolumn{1}{c}{15} &  1.777410996101287 &    2.656264636190942& 3.512014047361030&  4.421816274456907\\
\multicolumn{1}{c}{20} &   1.777410996101286&    2.656264636197187&   3.512014051598614&4.421843661628902\\
\multicolumn{1}{c}{25} &  1.777410996101286&   2.656264636197187&    3.512014051598621&   4.421843661635361\\
\multicolumn{1}{c}{30} &  1.777410996101284&    2.656264636197187&    3.512014051598614&  4.421843661635358\\
  \hline
\end{tabular}\end{center}
We see from Tables 6.1-6.6 that numerical eigenvalues achieve at least fourteen-digit accuracy with $N\geq 25$.
As a comparison, we list in Table 6.7 the results in \cite{FE}  computed by the
 finite element method with mesh size $h\thickapprox 0.2$. We observe that
numerical eigenvalues listed in Table 6.7 have only two- to three-digit accuracy, while our method
captured at least fourteen-digit accuracy with much less computational efforts.

Note that the multiplicity of the eigenvalue can be predicted by $|m|\le l$.
For $l=1$, we have $m=0,\pm 1$ and hence the first eigenvalue has multiplicity 3; for $l=2$, we have $m=0,\pm 1, \pm 2$
and therefore the second eigenvalue has multiplicity 5. In general, the $j$-th eigenvalue has multiplicity $2j+1$.
The first three numbers (read in horizontal) in Table 6.7 are for the TM mode with $l=1$, which compared with the first column in Table 6.4;
the second three numbers are for the TE mode with $l=1$, which compared with the first column in Table 6.1; the next five numbers are for the TM mode with $l=2$, which compared with the first column in Table 6.5; the following five numbers are for the TM mode with $l=2$, which compared with the
first column in Table 6.2; the last seven numbers in Table 6.7 are for the TE mode with $l=3$, which compared with the first column in Table 6.6.

\begin{center}{\small\bf Table 6.7 Computed Maxwell's transmission eigenvalues for the unit ball}
\begin{tabular}{{cccccccc}}
  \hline
 1.1741 &1.1717&1.1721&1.4665&1.4667&1.4671&1.4824&1.4828\\
 1.4828&1.4830&1.4836&1.7690&1.7690&1.7698&1.7700&1.7705\\
 1.7857&1.7859&1.7862&1.7865&1.7867&1.7868&1.7872&   \\
 \hline
\end{tabular}\end{center}

In order to further demonstrate accuracy and efficiency of our algorithm, we use numerical solutions with $N = 60$ as reference solutions
and plot errors of approximated eigenvalues with different $N$ in Figures 1-6. We observe exponential decay in all cases.

\begin{figure}[h!]
\begin{minipage}[c]{0.48\textwidth}
\centering
\includegraphics[width=1\textwidth]{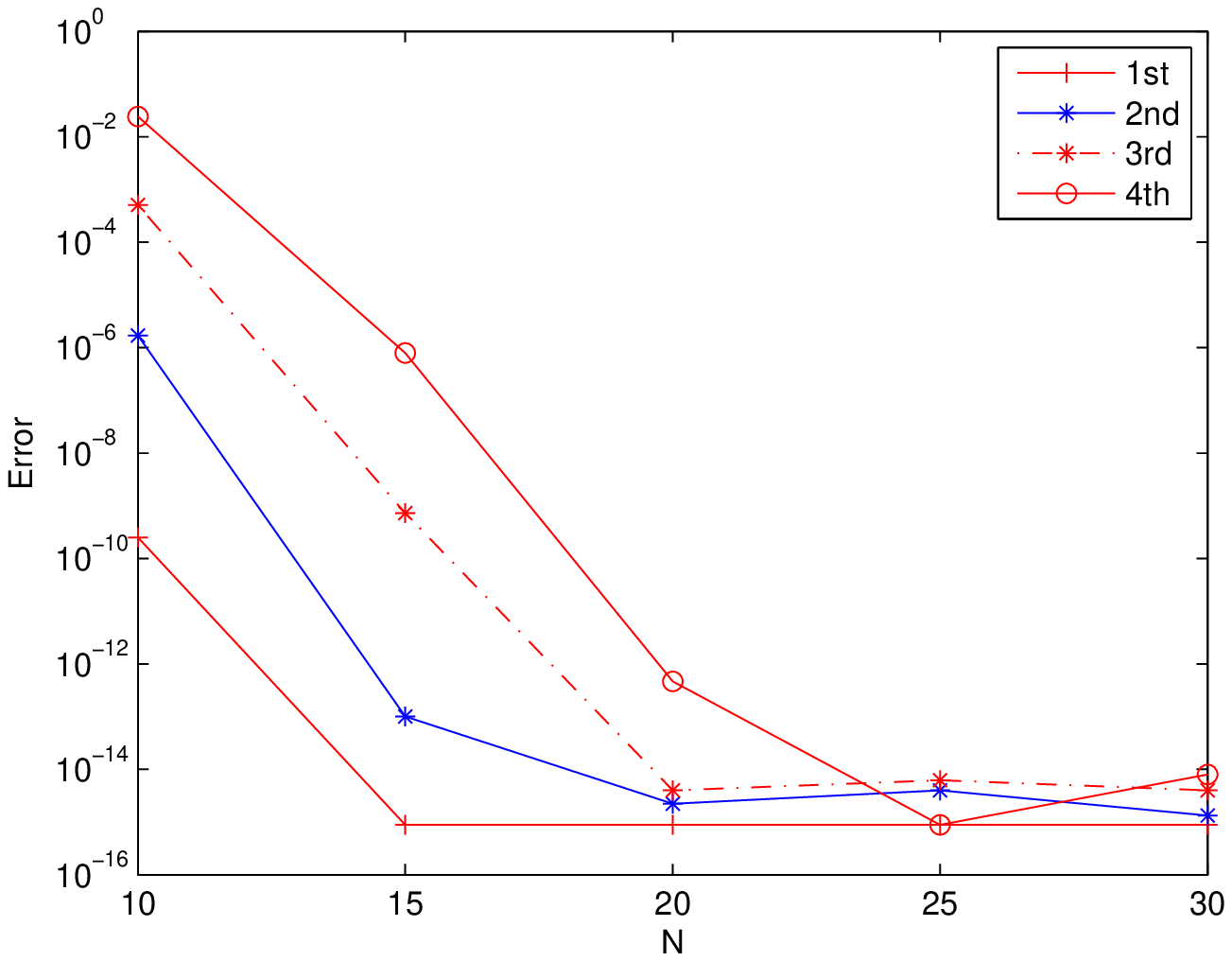}
\caption{Errors between numerical and reference solution of the TE mode for $l=1$.}\label{fig1}
\end{minipage}\hfill%
\begin{minipage}[c]{0.48\textwidth}
\centering
\includegraphics[width=1\textwidth]{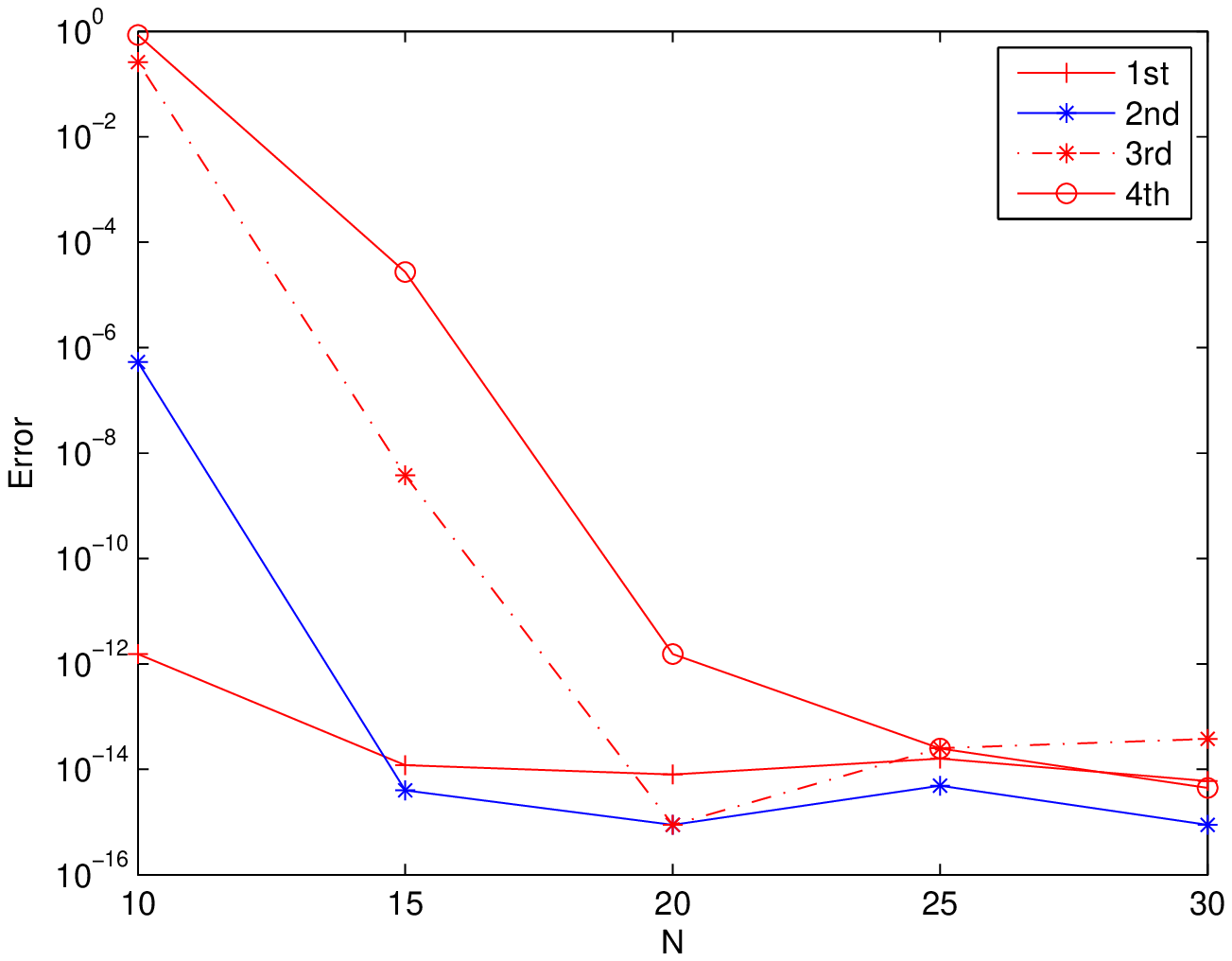}
\caption{Errors between numerical and reference solutions of the TM mode for $l=1$.}\label{fig2}
\end{minipage}\hfill%
\end{figure}

\begin{figure}[h!]
\begin{minipage}[c]{0.48\textwidth}
\centering
\includegraphics[width=1\textwidth]{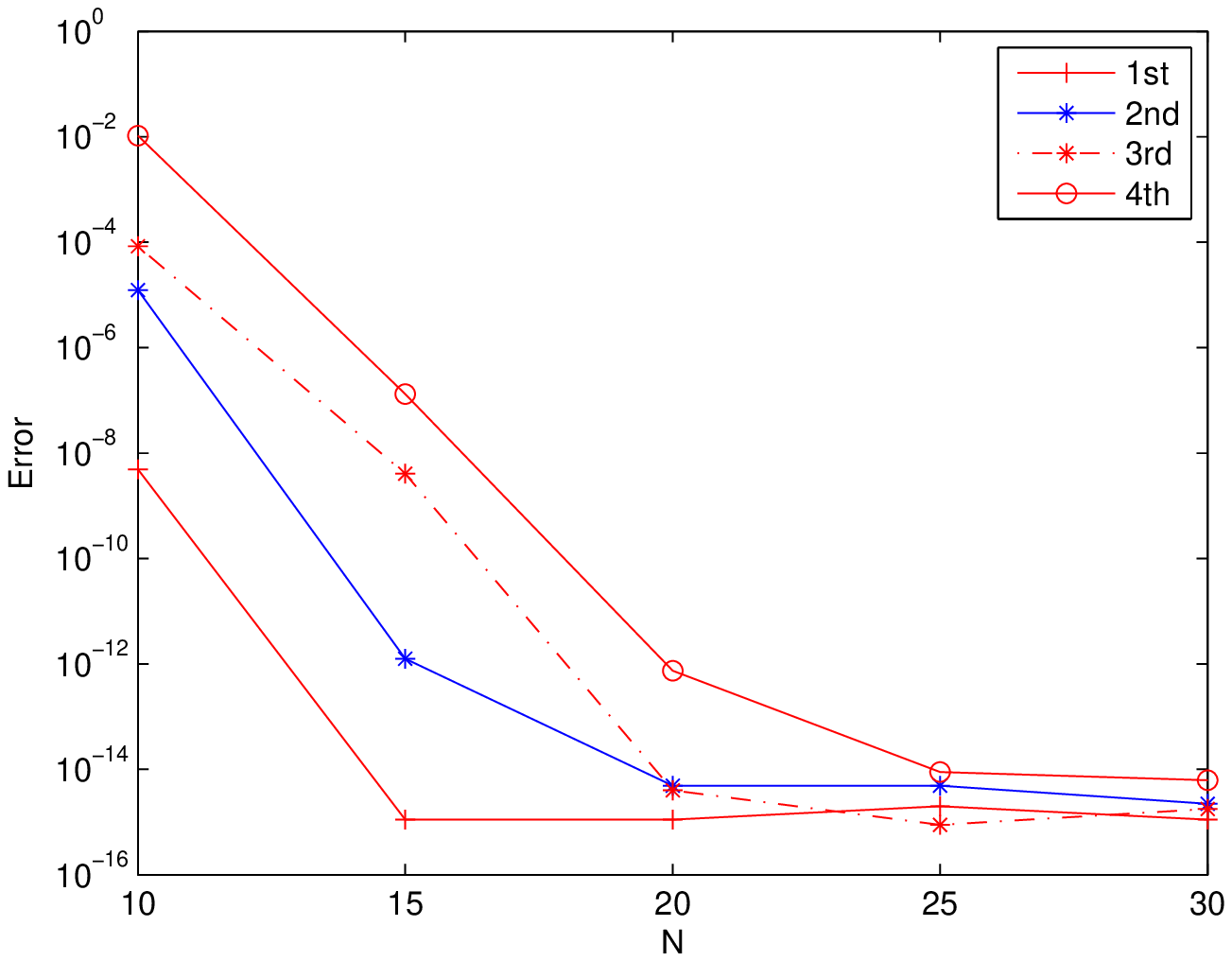}
\caption{Errors between numerical and reference solutions of the TE mode for $l=2$.}\label{fig3}
\end{minipage}\hfill%
\begin{minipage}[c]{0.48\textwidth}
\centering
\includegraphics[width=1\textwidth]{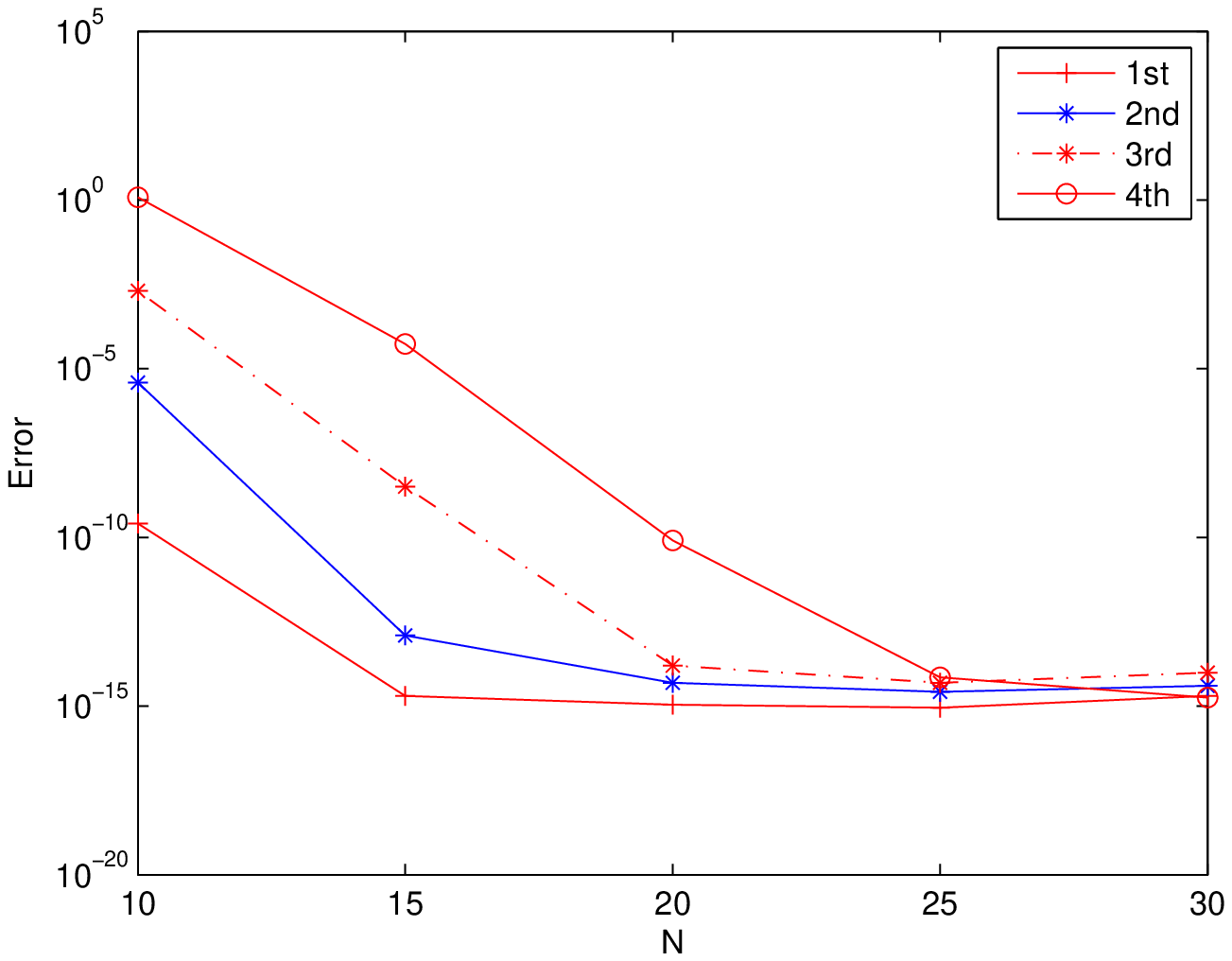}
\caption{Errors between numerical and reference solutions of the TM mode for $l=2$.}\label{fig4}
\end{minipage}\hfill%
\end{figure}

\begin{figure}[h!]
\begin{minipage}[c]{0.48\textwidth}
\centering
\includegraphics[width=1\textwidth]{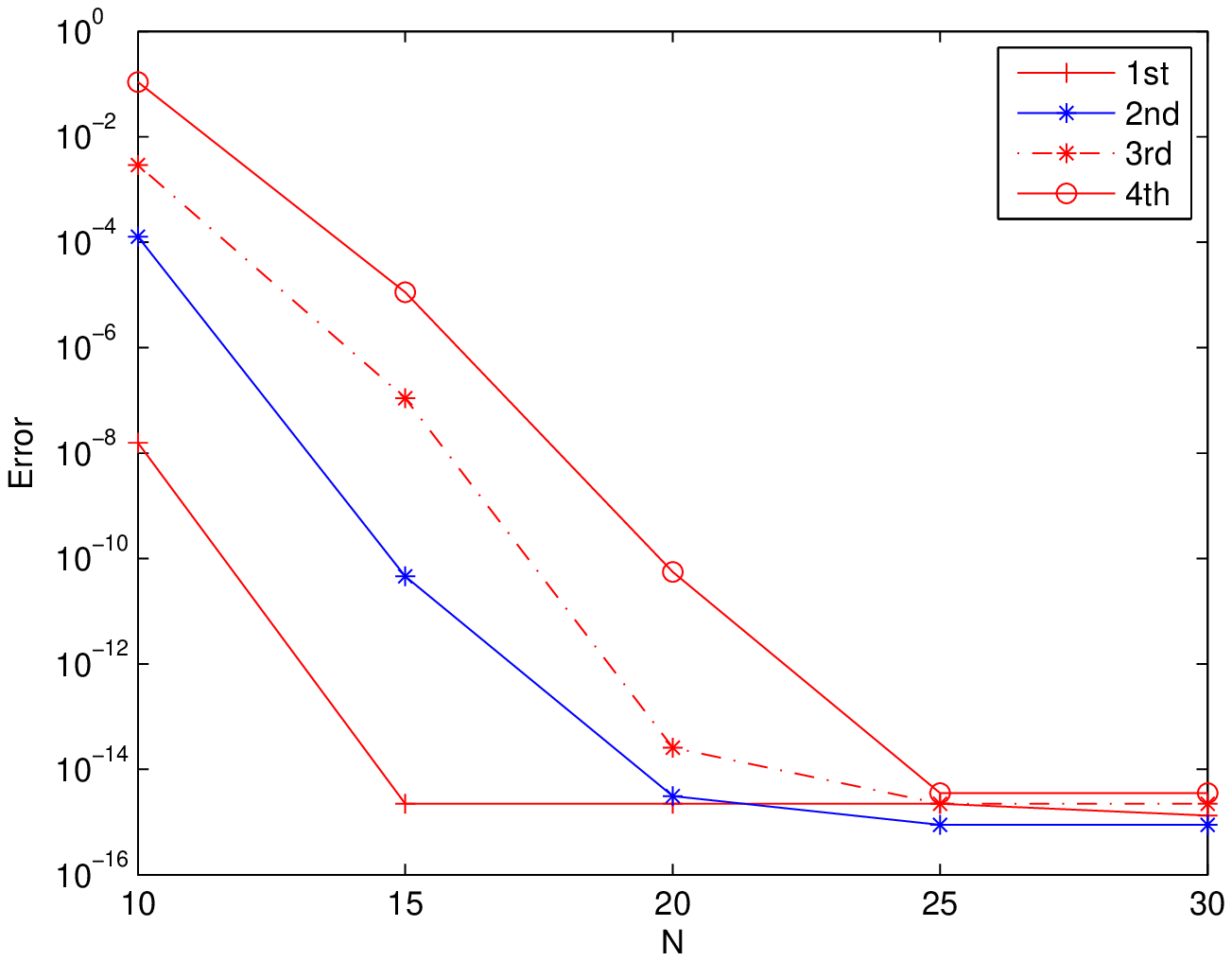}
\caption{Errors between numerical and reference solutions of the TE mode for $l=3$.}\label{fig5}
\end{minipage}\hfill%
\begin{minipage}[c]{0.48\textwidth}
\centering
\includegraphics[width=1\textwidth]{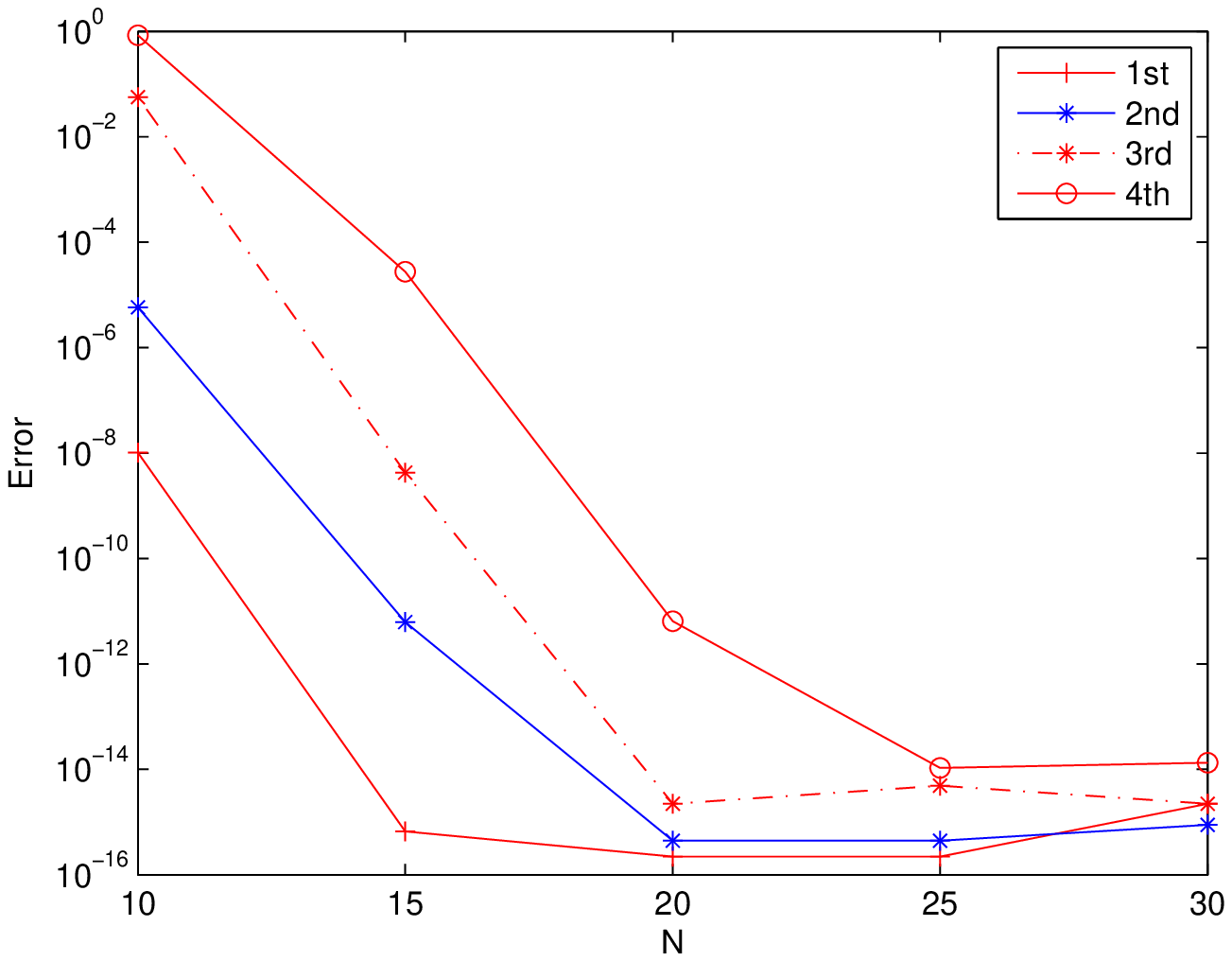}
\caption{Errors between numerical and reference solutions of the TM mode for $l=3$.}\label{fig6}
\end{minipage}\hfill%
\end{figure}

\subsection{Inhomogeneous medium $n$ }
We take $R=1$, $n=8+4r^2$, and $l=1$ in our examples. Numerical results of the first four eigenvalues with different $N$ are listed in Table 6.8 for the TE mode and in Table 6.9 for the TM mode, respectively.
\begin{center}{\small\bf Table 6.8 The first four eigenvalues of the TE mode for $l=1$ and different $N$ in unit ball}
\begin{tabular}{{ccccc}}
  \hline
\multicolumn{1}{c} N &$1st$&      $2nd$&    $ 3rd$&   $4th$\\ \hline
\multicolumn{1}{c}{10}&      1.924760241224309 &  3.066320509754762 &  4.953581565194202&  6.219217568046522\\
\multicolumn{1}{c}{15}&      1.924760240239596 &  3.066318451356159&   4.944962746965171&  6.162042706007884\\
\multicolumn{1}{c}{20}&   1.924760240239595&      3.066318451356097&  4.944962719618220&  6.162013704025296\\
\multicolumn{1}{c}{25}&   1.924760240239596&  3.066318451356097&     4.944962719618172&    6.162013703949516\\
\multicolumn{1}{c}{30}&   1.924760240239597&  3.066318451356096&       4.944962719618174&    6.162013703949522\\
  \hline
\end{tabular}\end{center}
\begin{center}{\small\bf Table 6.9 The first four eigenvalues of the TM mode for $l=1$ and different $N$ in unit ball}
\begin{tabular}{{ccccc}}
  \hline
\multicolumn{1}{c} N &$1st$&      $2nd$&    $ 3rd$&   $4th$\\ \hline
\multicolumn{1}{c}{10}&    1.546722576754737 &3.418052693123285 & 4.366864963075772 &4.640915773722154   \\
\multicolumn{1}{c}{15}&    1.546722576768443 &3.418109299464758 &  4.616102608978945 &6.423176009030875   \\
\multicolumn{1}{c}{20}&   1.546722576768448 & 3.418109299467635& 4.616102624493460 & 6.425723290604534\\
\multicolumn{1}{c}{25}&  1.546722576768440 & 3.418109299467642 & 4.616102624493454 & 6.425723292013815\\
\multicolumn{1}{c}{30}& 1.546722576768443 &3.418109299467622 &4.616102624493481 &6.425723292013920\\
  \hline
\end{tabular}\end{center}
We see from Tables 6.8-6.9 that numerical eigenvalues achieve at least thirteen-digit accuracy with $N\geq 25$.
Here again, we use numerical solutions with $N = 60$ as reference solutions and plot the errors in Figure 7-8.

\begin{figure}[h!]
\begin{minipage}[c]{0.48\textwidth}
\centering
\includegraphics[width=1\textwidth]{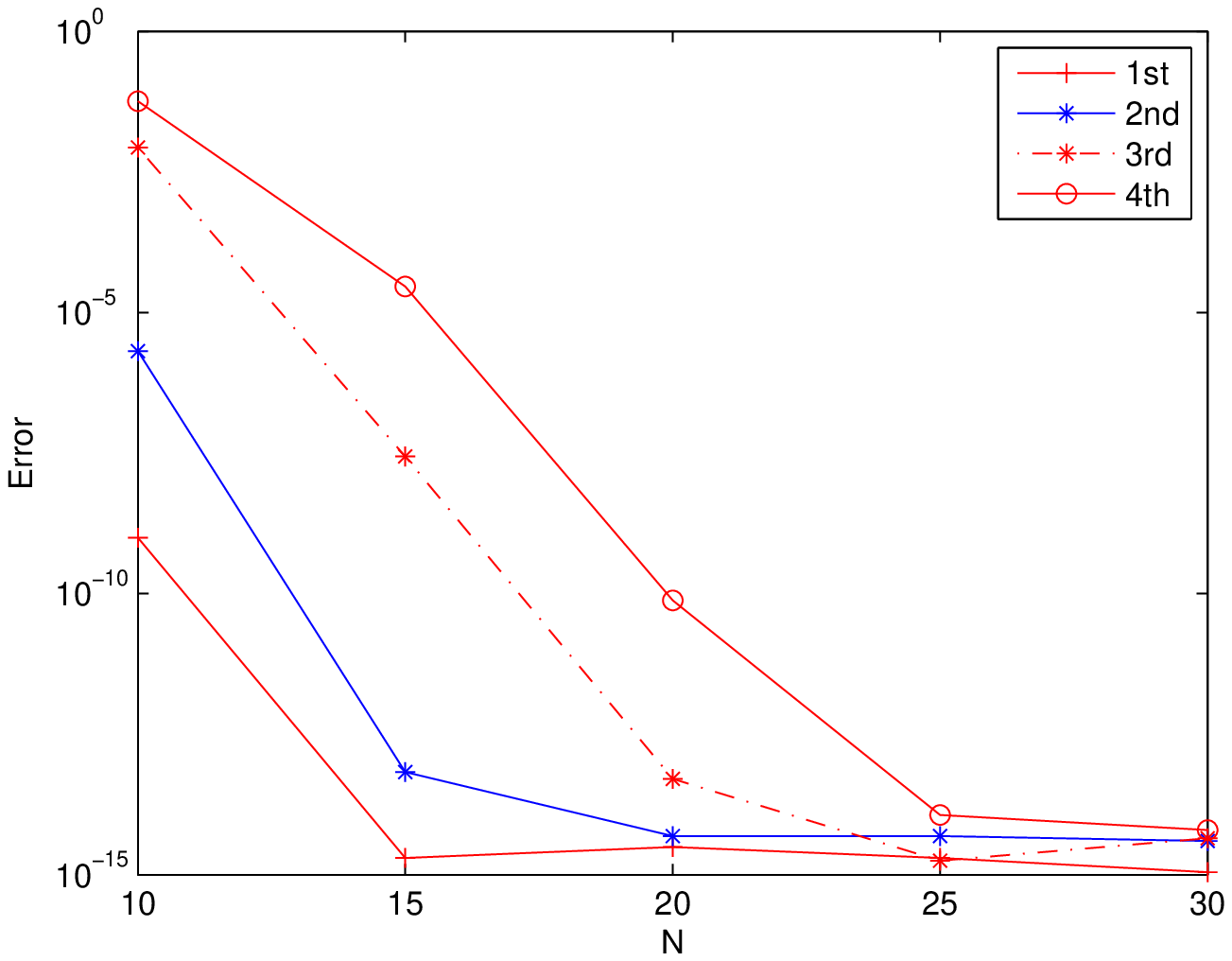}
\caption{Errors between numerical and reference solutions of the TE mode for $l=1$.}\label{fig7}
\end{minipage}\hfill%
\begin{minipage}[c]{0.48\textwidth}
\centering
\includegraphics[width=1\textwidth]{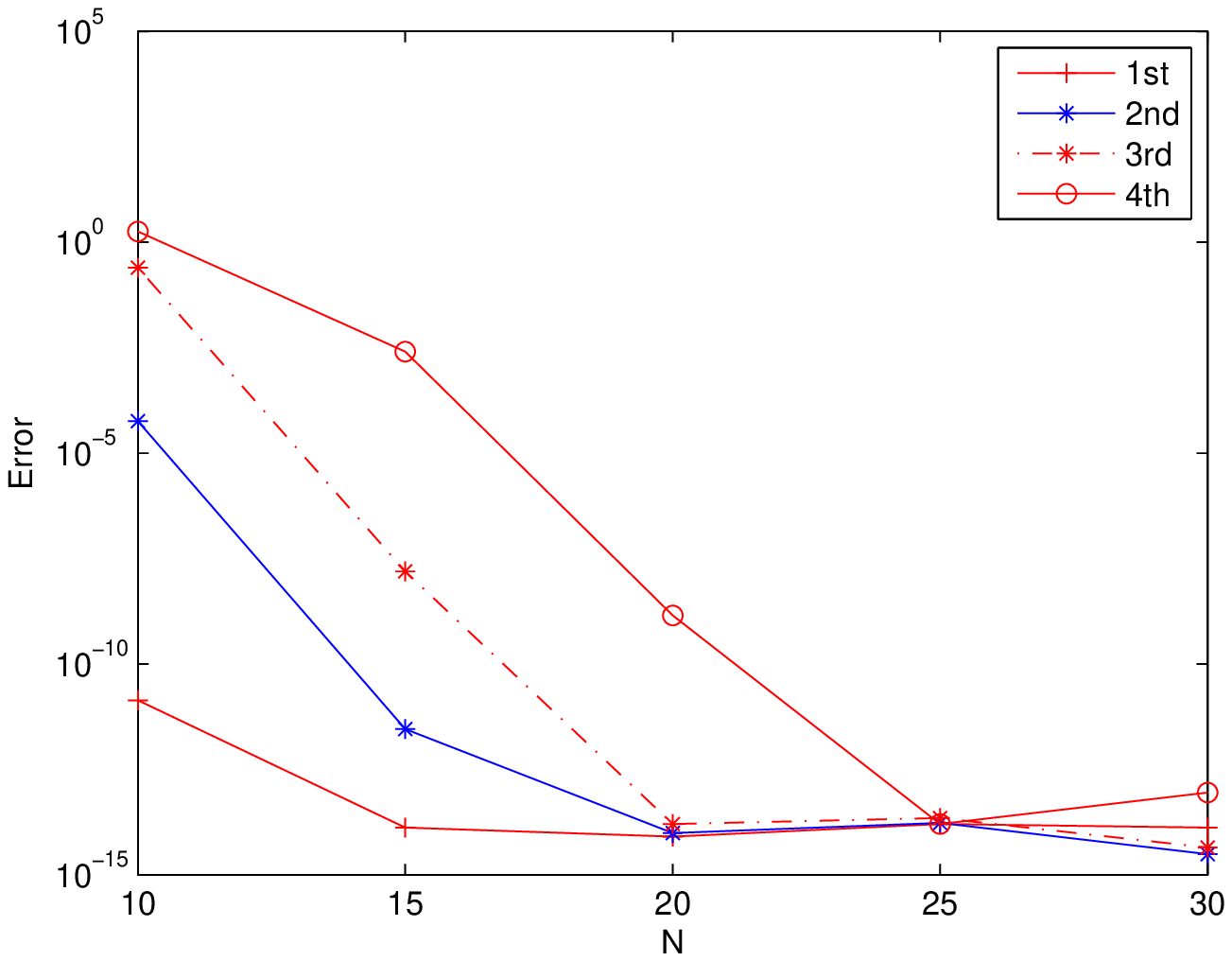}
\caption{Errors between numerical and reference solutions of the TM mode for $l=1$.}\label{fig8}
\end{minipage}\hfill%
\end{figure}
\section{Conclusions}\label{conc}
In summary, the method developed
in this paper is a first but important step towards a robust and accurate algorithm for more general transmission eigenvalue
problems which will be the subject of our future work.


\end{document}